\newtheorem{theorem}{Theorem}[section]
\newtheorem{lemma}{Lemma}[section]
\newtheorem{defn}{Definition}[section]
\numberwithin{equation}{section}
\theoremstyle{remark}
\newtheorem{remark}[theorem]{Remark}
\renewcommand{\l}{\left}
\renewcommand{\r}{\right}
\newcommand{\lb}{\left}
\newcommand{\rb}{\right}
\begin{document}
\title{On a new parameter involving Ramanujan's theta-functions}
\author[S. Chandankumar, H. S. Sumanth Bharadwaj and Vijay Yadav]
{S. Chandankumar, H. S. Sumanth Bharadwaj and Vijay Yadav}
\address[S. Chandankumar and H. S. Sumanth Bharadwaj]{Department of Mathematics and Statistics, M. S. Ramaiah University of Applied Sciences, Bangalore-560 058, Karnataka, India.} \email{chandan.s17@gmail.com, sumanthbharadwaj@gmail.com}
\address[Vijay Yadav]{Department of Mathematics, S. P. D. T. College, Andheri (E),
	Mumbai - 400059, Maharashtra, INDIA.} \email{vijaychottu@yahoo.com}

\begin{abstract}
Srinivasa Ramanujan recorded explicit evaluations of certain quotients of theta functions in his lost notebook. Motivated by the works of Ramanujan, Jinhee Yi systematically studied the analogues of explicit evaluation of quotients of theta functions by defining parameters. In this work, we define a new parameter involving theta-functions and establish some modular relations to explicitly evaluate the parameter. \end{abstract}

\subjclass[2010]{05A17; 11P83}
\keywords{Modular equations, theta-functions.}
\maketitle
\section{Introduction}
Ramanujan's contributions to the theory of theta functions \cite{RN} were significant and far-reaching. He developed his own theory of theta functions, which helped him to find many new results and properties in particular cases. He also rediscovered several theorems found in Jacobi's fundamental theta functions and triple product identity, which has numerous applications in the field of theta functions.

Ramanujan's theta functions are generalizations of the Jacobi theta functions, and they capture their general properties. In particular, the Jacobi triple product takes on a particularly elegant form when written in terms of the Ramanujan theta functions. Ramanujan's theta function $f(a,b)$ is defined by
\begin{equation*}
	f(a,b):=\sum_{n=-\infty}^{\infty}a^{n(n+1)/2} b^{n(n-1)/2},\,\,\,|ab|<1.
\end{equation*}
For complex numbers, $a$, $q $ with $|q|<1$, let $(a;q)_\infty$ = $\displaystyle \prod_{n=0}^\infty(1-aq^{n})$.

Using Jacobi's fundamental factorization formula \cite[Entry 19, p. 35]{BB3} $f(a,b)$ can be expressed in product as
\begin{equation}\label{jti}
	f(a,b)=(-a;ab)_\infty (-b;ab)_\infty(ab;ab)_\infty.
\end{equation}
Following theta-functions  $\varphi$, $\psi$ and $f$ are classical:
\begin{eqnarray}
\varphi(q)&:=&f(q,q)=\sum_{n=-\infty}^{\infty}q^{n^2} =(-q;q^2)^2_\infty(q^2;q^2)_\infty,\label{t1}\\
 \psi(q)&:=&f(q,q^3) =\sum_{n=0}^{\infty}q^{n(n+1)/2} =\frac{(q^2;q^2)_\infty}{(q;q^2)_\infty},\label{t2}\\
 f(-q)&:=&f(-q,-q^2)=\sum_{n=-\infty}^{\infty}(-1)^n q^{n(3n-1)/2}=(q;q)_\infty,\label{t3}
\end{eqnarray}
where the product representation in each of the last equality of \eqref{t1}--\eqref{t3} follows from \eqref{jti}. The Gaussian hypergeometric function is defined by
$$_2F_1(a,b;c;z):=\sum_{n=0}^{\infty}\frac{\lb(a\rb)_n \lb(b\rb)_n}{\lb(c\rb)_n n!}z^n,\ \ \ 0\leq|z|<1,$$
where $a$, $b$, $c$ are complex numbers, $c\neq0,-1,-2,\ldots$, and $$(a)_0=1,\ \ (a)_n=a(a+1)\cdots(a+n-1)\ \ \textrm{for any positive integer } n.$$
Now we recall the notion of a modular equation as understood by Ramanujan. The complete elliptic integral of the  first kind of modulus $k$, $0<k<1$ is defined by
\begin{equation}\label{ee11}
K(k):=\int_0^{\frac{\pi}{2}}\frac{d\phi}{\sqrt{1-k^2\sin^2\phi}}.
\end{equation}
Set $K'=K(k')$, where $k'=\sqrt{1-k^2}$  is the so--called complementary modulus of $k$.
It is classical to set $q(k)=e^{-\pi K(k')/K(k)}$ so that $q$ is one-to-one and increases from 0 to 1. 
A fundamental result in the theory of elliptic functions \cite[Entry 6, p. 101]{BB3} is given by
\begin{equation}\label{ee11}
	\varphi^2(q)=\frac{2}{\pi}K(k)=\sum_{n=0}^{\infty}\frac{\left(\frac{1}{2}\right)^2_n}{\left(n!\right)^2}k^{2n}.
\end{equation}
Let $L_1$ and ${L'_1}$ denote the complete elliptic integral of the first kind associated with the moduli $l_1$ and $l'_1,$ respectively. Suppose that the following equality
\begin{equation}\label{ee12}
n_1\frac{K'}{K}=\frac{L'_1}{L_1},
\end{equation}
holds for some positive integer $n_1$. Then a modular equation of degree $n_1$ is a relation between the moduli $k$ and $\ell_1$ which is induced by \eqref{ee12}.  Following Ramanujan, set $\alpha=k^2$ and $\beta=\ell_1^2$. We say that $\beta$ is of degree $n_1$ over $\alpha$. The multiplier $m$ corresponding to the degree $n_{1}$ is defined by
\begin{equation}\label{a1}
m:=\frac{K}{L_1}=\frac{\varphi^2(q)}{\varphi^2(q^{n_1})}.
\end{equation}

By using the transformation formulae for theta-functions recorded by Ramanujan for theta-functions $f(-q)$, $f(q)$ and $\varphi(q)$, Jinhee Yi \cite{jy1} introduced parameters $r_{k,n}$ and $r'_{k,n}$ as follows:

\begin{equation}\label{rkn}
r_{k,n}=\frac{f(-q)}{k^{1/4}q^{(k-1)/24}f({-q^k})},\,\,\,\textrm{where}\,\,\,q=e^{-2 \pi\sqrt{n/k}},
\end{equation}
\begin{equation}\label{rkkn}
r'_{k,n}=\frac{f(q)}{k^{1/4}q^{(k-1)/24}f({q^k})},\,\,\,\textrm{where}\,\,\,q=e^{-\pi\sqrt{n/k}}.
\end{equation}
 Yi \cite{jy2}, introduced two parameters $h_{k,n}$ and $h'_{k,n}$ as follows:
\begin{equation}\label{hkn}
h_{k,n}:=\frac{\varphi(q)}{k^{1/4}\varphi(q^k)},\,\,\,\textrm{where}\,\,\,q=e^{- \pi\sqrt{n/k}},
\end{equation}
\begin{equation}\label{hkkn}
h'_{k,n}:=\frac{\varphi(-q)}{k^{1/4}\varphi(-q^k)},\,\,\,\textrm{where}\,\,\,q=e^{-2 \pi\sqrt{n/k}},
\end{equation}
 and systematically studied several properties of the parameters and also found plethora of explicit evaluations of $r_{k,n}$, $r'_{k,n}$, $h_{k,n}$ and $h'_{k,n}$ for different positive real values of $n$ and $k$. She also established several new values of $\varphi(e^{-n\pi}).$

Adiga et. al. \cite{CA3}, derived a new transformation formula for $\psi(-q)$. Using this transformation formula Baruah and Nipen Saikia \cite{ND2}, and  Yi et. al. \cite{jy5} defined the parameters $l_{k,n}$ and $l'_{k,n}$:
\begin{equation}\label{lkn}
l_{k,n}:=\frac{\psi(-q)}{k^{1/4}q^{(k-1)/8}\psi(-q^k )},\,\,\,\textrm{where}\,\,\,q=e^{- \pi\sqrt{n/k}},
\end{equation}
\begin{equation}\label{lkkn} l'_{k,n}:=\frac{\psi(q)}{k^{1/4}q^{(k-1)/8}\psi(q^k )},\,\,\,\textrm{where}\,\,\,q=e^{- \pi\sqrt{n/k}}. \end{equation} They established several evaluations of $l_{k,n}$ and $l'_{k,n}$. Saikia \cite{NS1} by using a transformation formula recorded by Ramanujan introduced the following parameter $A_{k,n}$ as
\begin{equation}\label{ak1}
A_{k,n}=\frac{\varphi(-q)}{2k^{1/4}q^{k/4}\psi(q^{2k})},\ \ q=e^{-\pi\sqrt{n/k}}.
\end{equation}
He studied several properties and established some general theorems for the explicit evaluations of $A_{k,n}$. Motivated by these works, we define a new parameter $A'_{k,n}$ as:
\begin{defn}For any positive rationals $n$ and $k$, we have
\begin{equation}\label{ak2}
A'_{k,n}=\frac{\varphi(q)}{2k^{1/4}q^{k/4}\psi(q^{2k})},\ \ q=e^{-\pi\sqrt{n/k}},
\end{equation}
\end{defn}
and establish several evaluations of $A'_{k,n}.$

This work is organized as follows. Some notations and background results are listed in Section \ref{21}. We derive new modular equations involving theta-functions $\varphi$ and $\psi$ in Section \ref{31}. Several new explicit evaluations of $A_{k,n}$ and $A'_{k,n}$ are established in Section \ref{41}. In Section \ref{61} we establish some new modular relations for a modular function of Level 16 developed by Dongxi Ye \cite{Dye} and also explicitly evaluate the function. Finally, in Section \ref{s51} we list some general formulas for explicit evaluations of $h'_{2,n}$.

\section{Preliminary results}\label{21}
In this section, we list a few theta-function identities involving theta-functions $\varphi$ and $\psi$ which are useful in deriving modular equations. 

\begin{lemma}We have \cite[Entry 25 (ii), (iii), (iv), (v) and (vii),  p. 40]{BB3}
\begin{eqnarray}
&&\varphi(q)-\varphi(-q)=4q\psi(q^8),\label{d8}\\
&&\varphi(q)\varphi(-q)=\varphi^2(-q^2),\label{d12}\\
&&\varphi(q)\psi(q^2)=\psi^2(q),\label{d11}\\
&&\varphi^2(q)-\varphi^2(-q)=8q\psi^2(q^4),\label{d9}\\
&&\varphi^4(q)-\varphi^4(-q)=16q\psi^4(q^2).\label{d10}
\end{eqnarray}
\end{lemma}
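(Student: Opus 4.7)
The plan is to derive the five identities in the stated order, using \eqref{d8}--\eqref{d11} to bootstrap the higher--power identities \eqref{d9}--\eqref{d10} at the end. Identities \eqref{d8}--\eqref{d11} can each be obtained directly from the series or product representations of $\varphi$ and $\psi$ recorded in (1.1)--(1.3).

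For \eqref{d8}, I would split the sum $\varphi(q)=\sum_{n\in\mathbb{Z}}q^{n^2}$ into its even--index and odd--index parts. Subtracting $\varphi(-q)$ cancels the even contribution and doubles the odd one, giving $\varphi(q)-\varphi(-q)=2\sum_{n\,\text{odd}}q^{n^2}$. Writing $n=2m+1$ converts the exponent to $4m(m+1)+1$, and the symmetry $m\mapsto -m-1$ folds the $m\in\mathbb{Z}$ sum into $2\sum_{m\geq 0}q^{4m(m+1)}=2\psi(q^8)$, yielding $4q\psi(q^8)$.

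For \eqref{d12} and \eqref{d11} I would work exclusively with the product forms. The key algebraic facts are $(-q;q^2)_\infty(q;q^2)_\infty=(q^2;q^4)_\infty$ (from $(1-q^{2n-1})(1+q^{2n-1})=1-q^{2(2n-1)}$) and $(q^2;q^2)_\infty=(q^2;q^4)_\infty(q^4;q^4)_\infty$. Substituting these into the product expansion of $\varphi(q)\varphi(-q)$ collapses it to $(q^2;q^4)_\infty^{4}(q^4;q^4)_\infty^{2}=\varphi^2(-q^2)$; substituting the same identities into $\varphi(q)\psi(q^2)$ produces $(q^2;q^2)_\infty^{2}/(q;q^2)_\infty^{2}=\psi^2(q)$.

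The last two identities follow by factoring differences of squares. I would write $\varphi^2(q)-\varphi^2(-q)=[\varphi(q)-\varphi(-q)][\varphi(q)+\varphi(-q)]$; the first bracket is $4q\psi(q^8)$ by \eqref{d8}, and the second equals $2\varphi(q^4)$ by the same even/odd split (using $n=2m$). Replacing $q$ by $q^4$ in \eqref{d11} then converts $\varphi(q^4)\psi(q^8)$ into $\psi^2(q^4)$, establishing \eqref{d9}. For \eqref{d10}, I would factor once more to get $\varphi^4(q)-\varphi^4(-q)=8q\psi^2(q^4)\cdot[\varphi^2(q)+\varphi^2(-q)]$, and prove the auxiliary identity $\varphi^2(q)+\varphi^2(-q)=2\varphi^2(q^2)$ by expanding $\varphi^2$ as the double sum $\sum_{n,m\in\mathbb{Z}}q^{n^2+m^2}$, retaining only terms with $n+m$ even, and applying the bijection $(n,m)\leftrightarrow(k,\ell)$ with $n=k+\ell,\ m=k-\ell$, so that $n^2+m^2=2(k^2+\ell^2)$. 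A final invocation of \eqref{d11} at $q^2$, turning $\varphi^2(q^2)\psi^2(q^4)$ into $\psi^4(q^2)$, completes the chain. The main obstacle is bookkeeping: each step is elementary, but the simultaneous rewriting of $(-q;q^2)_\infty$, $(q;q^2)_\infty$, and $(q^2;q^2)_\infty$ in the $(q^2;q^4)_\infty$--$(q^4;q^4)_\infty$ basis, together with the bijection in the last step, is where sign and index errors most naturally creep in.
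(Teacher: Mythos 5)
Your proof is correct, but it takes a different route from the paper only in the trivial sense that the paper offers no proof at all: this Lemma is quoted verbatim from Berndt's \emph{Ramanujan's Notebooks, Part III} (Entry 25, p.~40) and is used as a known background fact. Your self-contained derivation checks out at every step: the even/odd split of $\sum_{n\in\mathbb{Z}}q^{n^2}$ with $n=2m+1$ and the involution $m\mapsto-m-1$ correctly gives \eqref{d8} (and, with $n=2m$, the companion identity $\varphi(q)+\varphi(-q)=2\varphi(q^4)$); the product manipulations via $(-q;q^2)_\infty(q;q^2)_\infty=(q^2;q^4)_\infty$ and $(q^2;q^2)_\infty=(q^2;q^4)_\infty(q^4;q^4)_\infty$ do collapse $\varphi(q)\varphi(-q)$ and $\varphi(q)\psi(q^2)$ to the stated right-hand sides; and the difference-of-squares factorizations, together with the auxiliary identity $\varphi^2(q)+\varphi^2(-q)=2\varphi^2(q^2)$ proved by the lattice bijection $n=k+\ell$, $m=k-\ell$, reduce \eqref{d9} and \eqref{d10} to instances of \eqref{d11} at $q^4$ and $q^2$ respectively. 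What your argument buys is independence from the reference: a reader gets elementary, verifiable proofs from the series and product definitions alone. What the citation buys the authors is brevity and a pointer to the standard source, where these identities sit inside a larger coherent catalogue (Entry 25 also contains further companions used implicitly elsewhere). Either is acceptable here; if you were to include your proof, the only place demanding care is, as you note, the bookkeeping in the base change to $(q^2;q^4)_\infty$ and $(q^4;q^4)_\infty$, which you have handled correctly.
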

\begin{lemma}\cite[Entry 10 (i), (iii), p. 122]{BB3} We have
	\begin{equation}\label{d4}
		\varphi(-q)=\varphi(q)(1-\alpha)^{1/8},
	\end{equation}
	\begin{equation}\label{d41}
		\varphi(-q^2)=\varphi(q)(1-\alpha)^{1/4}.
	\end{equation}
\end{lemma}
\begin{lemma}\cite[Entry 11 (i), (iii), (iv) and (v),  p. 123]{BB3} We have
\begin{equation}\label{d51}
\psi (q)=\sqrt{\frac{{z}}{2}}\left(\frac{\alpha}{q}\right)^{1/8},
\end{equation}
\begin{equation}\label{d52}
\psi (q^2)={\frac{\sqrt{z}}{2}}\left(\frac{\alpha}{q}\right)^{1/4},
\end{equation}
\begin{equation}\label{d53}
\psi (q^4)=\frac{\sqrt z\{1-\sqrt{1-\alpha}\}^{1/2}}{2\sqrt{2q}},
\end{equation}
\begin{equation}\label{d5}
\psi (q^8)=\frac{\sqrt z\{1-(1-\alpha)^{1/4}\}}{4q},
\end{equation}
\end{lemma}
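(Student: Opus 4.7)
The plan is to prove the four identities in an order that lets each be bootstrapped from its predecessor using the theta-function identities collected in Lemma 2.1. Throughout, the anchor is the basic parametrization $\varphi^2(q) = z$, which is immediate from the last equality in \eqref{ee11}, together with its companion $\varphi^2(-q) = z\sqrt{1-\alpha}$, which is the squared form of the Landen-type identity recorded in \eqref{d4}.

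First I would establish \eqref{d52} for $\psi(q^2)$. This identity is not a formal consequence of Lemma 2.1 alone, so at this step one has to reach outside that lemma. The cleanest route is to invoke the classical theta-null identity $\vartheta_2^2(0\mid\tau) = k\,\vartheta_3^2(0\mid\tau)$, which in Ramanujan's notation reads $4q^{1/2}\psi^2(q^2) = \sqrt{\alpha}\,\varphi^2(q) = \sqrt{\alpha}\,z$; taking positive square roots yields \eqref{d52}. From here \eqref{d51} drops out of \eqref{d11}: squaring and substituting $\varphi(q) = \sqrt{z}$ together with the formula just obtained for $\psi(q^2)$ gives $\psi^2(q) = (z/2)(\alpha/q)^{1/4}$, whence \eqref{d51}.

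For \eqref{d53} I would apply \eqref{d9}, substituting $\varphi^2(q) = z$ and $\varphi^2(-q) = z\sqrt{1-\alpha}$ to obtain
\[
8q\,\psi^2(q^4) = z\bigl(1 - \sqrt{1-\alpha}\bigr),
\]
from which \eqref{d53} follows by a square root. An entirely analogous calculation using \eqref{d8} together with $\varphi(q) = \sqrt{z}$ and $\varphi(-q) = \sqrt{z}\,(1-\alpha)^{1/4}$ gives $4q\,\psi(q^8) = \sqrt{z}\bigl(1 - (1-\alpha)^{1/4}\bigr)$, which is \eqref{d5}.

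The only genuinely nontrivial step in this chain is the first one: once $\psi(q^2)$ has been pinned down in terms of $z$ and $\alpha$, the remaining three formulas reduce to short algebraic manipulations of the identities collected in Lemma 2.1. Pinning $\psi(q^2)$ down in the first place must invoke the hypergeometric inversion $K = (\pi/2)\,{}_2F_1(\tfrac12,\tfrac12;1;\alpha)$ (equivalently Entry 10 of Chapter 17 of Ramanujan's second notebook), since that is what injects $\alpha$ into the theta-null picture at all.
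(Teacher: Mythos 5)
The paper does not prove this lemma at all: it is quoted verbatim from Berndt \cite[Ch.~17, Entry~11]{BB3} as background, so there is no internal argument to compare against. Your derivation is correct and is a reasonable reconstruction of how these formulas are obtained. The chain checks out: with $\varphi(q)=\sqrt z$ and $\varphi(-q)=\sqrt z\,(1-\alpha)^{1/4}$, identity \eqref{d9} gives $8q\psi^2(q^4)=z(1-\sqrt{1-\alpha})$ and \eqref{d8} gives $4q\psi(q^8)=\sqrt z\,(1-(1-\alpha)^{1/4})$, which are \eqref{d53} and \eqref{d5}; the theta-null relation $4q^{1/2}\psi^2(q^2)=\sqrt{\alpha}\,z$ gives \eqref{d52}; and \eqref{d11} then yields \eqref{d51}. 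Two remarks. First, your appeal to the classical identity $\vartheta_2^2=k\,\vartheta_3^2$ is avoidable: identity \eqref{d10} with the same substitutions gives $16q\psi^4(q^2)=z^2\bigl(1-(1-\alpha)\bigr)=z^2\alpha$ directly, so \eqref{d52} (and hence the whole lemma) already follows from Lemma~2.1 together with Entry~10~(i),(ii) of Chapter~17 --- no ingredient outside the paper's stated toolkit is needed, contrary to your claim that \eqref{d52} ``is not a formal consequence of Lemma 2.1 alone.'' Second, be aware that the form $\varphi^2(-q)=z\sqrt{1-\alpha}$ you use is the \emph{correct} statement of Entry~10(ii), but it conflicts with the paper's \eqref{d4} as printed, where the exponent appears as $1/8$; the exponents in \eqref{d4} and \eqref{d41} seem to have been interchanged (as one checks against \eqref{d12}), and your usage is the right one.
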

where $z:=\ _2F_1\l(\frac{1}{2},\frac{1}{2};1;x\r)$ and $y:=\pi\dfrac{_2F_1\l(\frac{1}{2},\frac{1}{2};1;1-x\r)}{_2F_1\l(\frac{1}{2},\frac{1}{2};1;x\r)}$.

\begin{lemma}We have
\begin{enumerate}
\item \cite[Eq. (24.21), p. 215]{BB3} If $\beta $ is of degree $2$ over $\alpha $, then
\begin{equation}\label{d2}
\beta =\left( \frac{1-\sqrt{1-\alpha }}{1+\sqrt{1-\alpha }}\right) ^{2}.
\end{equation}
\item \cite[Entry 5 (ii), p. 230]{BB3} If $\beta $ is of degree $3$ over $\alpha $, then
\begin{equation}\label{d3}
\left( \alpha \beta \right)^{1/4}+\left( (1-\alpha )(1-\beta)\right)^{1/4}=1.
\end{equation}
\item \cite[Entry 13(i), p. 280]{BB3} If $\beta $ is of degree $5$ over $\alpha $, then
\begin{equation}\label{d7}
\left( \alpha \beta \right)^{1/2}+\left((1-\alpha )(1-\beta)\right)^{1/2} +2\left(\alpha \beta (1-\alpha)(1-\beta)\right)^{1/6}=1.
\end{equation}
\item \cite[Entry 19, p. 314]{BB3} If $\beta $ is of degree $7$ over $\alpha $, then
\begin{equation}\label{d6}
\left( \alpha \beta \right)^{1/8}+\left(\alpha \beta (1-\alpha)(1-\beta)\right)^{1/8}=1.
\end{equation}
\end{enumerate}
\end{lemma}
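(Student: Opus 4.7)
The plan is to derive the four modular equations in turn by combining the hypergeometric representation \eqref{ee11} of the complete elliptic integral with the theta-function identities already recorded in Lemmas 2.1 and 2.2. By \eqref{a1}, the moduli $\alpha$ and $\beta$ corresponding to the bases $q$ and $q^{n_1}$ are linked through the multiplier $m=\varphi^{2}(q)/\varphi^{2}(q^{n_1})$, and every theta constant appearing on the two sides of a proposed relation can be rewritten as a function of $z$ and $\alpha$ by means of Lemma 2.2 and the companion formulas \eqref{d4}, \eqref{d41}. Thus each of the four items reduces in principle to an algebraic statement in $\alpha$ and $\beta$ alone.

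I would dispose of part (i) first. Replacing $q$ by $q^{2}$ in \eqref{d12} gives $\varphi(q^{2})\varphi(-q^{2})=\varphi^{2}(-q^{4})$, and \eqref{d41} expresses $\varphi(-q^{2})$ in terms of $\varphi(q)$ and $(1-\alpha)^{1/4}$. Comparing $\varphi^{2}(q^{2})$ with the Landen-type transformation for $\,_{2}F_{1}(1/2,1/2;1;\cdot)$ applied to $\alpha\mapsto\beta$ then yields
\[
\sqrt{\beta}=\frac{1-\sqrt{1-\alpha}}{1+\sqrt{1-\alpha}},
\]
and squaring produces \eqref{d2}.

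For parts (ii)–(iv) the strategy is formally the same but computationally much heavier. Ramanujan's method, as presented by Berndt in \cite{BB3}, is to produce a theta-function identity that specializes under $q\mapsto q^{n_{1}}$ to the desired algebraic relation between $\alpha$ and $\beta$. In the degree-$3$ case one uses a dissection of $\varphi(q)\varphi(q^{3})$ (equivalently, a suitable eta-quotient identity) together with two applications of \eqref{d4}. For degrees $5$ and $7$ one must invoke Schl\"afli-type theta identities and use \eqref{d8}–\eqref{d10} with some care to extract the symmetric forms \eqref{d7} and \eqref{d6}.

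The main obstacle is certainly the degree-$5$ and degree-$7$ cases: the symmetric forms hide a great deal of computation, and the required dissection identities are far from obvious at sight. Since all four relations are classical identities of Ramanujan whose proofs occupy several pages of Chapters 19 and 20 of \cite{BB3}, the cleanest course in the present paper is simply to quote those derivations; a self-contained treatment here would duplicate that material without shedding new light on the parameter $A'_{k,n}$ studied in the sequel.
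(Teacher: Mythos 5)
Your proposal is consistent with the paper, which offers no proof at all for this lemma: each of the four modular equations is simply quoted from the indicated entries of Berndt's \emph{Ramanujan's Notebooks, Part III}, exactly as you conclude is the appropriate course. Your added sketch of part (i) via the Landen transformation is correct but goes beyond anything the paper attempts.
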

\begin{lemma}\cite[p. 233, (5.2)]{BB3} If $m=z_1/z_3$ and $\beta$ has degree 3 over $\alpha$, then
\begin{align}\label{d121}
\beta=\frac{(m-1)^3(3+m)}{16m}.
\end{align}
\end{lemma}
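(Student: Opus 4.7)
My plan is to couple the degree-$3$ modular equation (\ref{d3}) with a companion identity for the multiplier obtained by differentiating it, since (\ref{d3}) alone is only one relation between the two unknowns $\alpha$ and $\beta$ and so cannot determine them individually.

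As a first step I would symmetrize (\ref{d3}) by setting $u:=(\alpha\beta)^{1/4}$ and $v:=((1-\alpha)(1-\beta))^{1/4}$, so that (\ref{d3}) collapses to $u+v=1$. Expanding $(1-\alpha)(1-\beta)$ then gives $\alpha\beta=u^4$ and $\alpha+\beta=1+u^4-(1-u)^4$, so $\alpha,\beta$ are the two roots of a quadratic with coefficients polynomial in $u$, and the entire problem reduces to finding $u$ in terms of $m$.

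Next I would bring the multiplier into play by differentiating (\ref{d3}) logarithmically with respect to $q$, using the classical formulas $q\,d\alpha/dq=\alpha(1-\alpha)z_1^2$ and $q\,d\beta/dq=3\beta(1-\beta)z_3^2=3\beta(1-\beta)z_1^2/m^2$. After clearing common factors of $z_1^2$ and rationalising, this produces a second equation of the form
\[
(\alpha\beta)^{1/4}\bigl[m^2(1-\alpha)+3(1-\beta)\bigr] = ((1-\alpha)(1-\beta))^{1/4}\bigl[m^2\alpha+3\beta\bigr],
\]
which, combined with $u+v=1$ and the symmetric functions from the previous step, pins down $u=(m-1)(m+3)/(4m)$. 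Substituting back and solving the quadratic recovers $\alpha=(m-1)(m+3)^3/(16m^3)$ and $\beta=(m-1)^3(m+3)/(16m)$, the latter being the claim.

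The main obstacle is the differentiation step: while the mechanics are routine, one must invoke the classical derivative formula $q\,d\alpha/dq=\alpha(1-\alpha)z_1^2$ linking the modulus $\alpha$ to the complete elliptic integral $z_1$ (a nontrivial Ramanujan identity not recorded in the excerpt), and the subsequent elimination of $\alpha+\beta$ and $\alpha\beta$ in favour of $u$ demands careful bookkeeping with signs and branches. Once the companion identity is in hand, symmetrization and back-substitution are purely algebraic.
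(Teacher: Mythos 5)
The paper does not prove this lemma at all: it is quoted verbatim from Berndt's \emph{Ramanujan's Notebooks, Part III} \cite[p.~233, Eq.~(5.2)]{BB3}, so your comparison is really with the classical derivation rather than with anything in this paper. Your route is essentially that classical derivation, and it is sound: the formula $q\,d\alpha/dq=\alpha(1-\alpha)z_1^2$ is correct (it follows from $dq/dk=\pi^2q/(2kk'^2K^2)$ together with $z_1=\varphi^2(q)=2K/\pi$), the chain rule then gives $q\,d\beta/dq=3\beta(1-\beta)z_1^2/m^2$, and differentiating $u+v=1$ yields exactly the companion identity you display. One remark that both simplifies and exposes the one soft spot in your sketch: since $u+v=1$, your companion identity collapses to the \emph{linear} relation $m^2\alpha+3\beta=(m^2+3)u$, which combined with $\alpha+\beta=1+u^4-(1-u)^4$ and $\alpha\beta=u^4$ leaves a single quartic in $u$ with coefficients in $m^2$; this quartic has four roots (they include both $u$ and $v$, reflecting the invariance under $m\mapsto-m$), so ``pins down'' requires a branch condition such as $u\to0$ and $m-1\sim 4q$ as $q\to0$ to isolate $u=(m-1)(m+3)/(4m)$. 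With that choice the back-substitution indeed gives $\beta=(m-1)^3(m+3)/(16m)$, with $\alpha=(m-1)(m+3)^3/(16m^3)$ as the other root of the quadratic, in agreement with Berndt's (5.1) and (5.2). What your approach buys is an actual proof from the degree-three modular equation \eqref{d3}; what it costs is the auxiliary derivative identity, which is a genuine classical input not recorded among the paper's preliminary lemmas, plus the branch bookkeeping you already flagged.
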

\begin{lemma}\cite[Entry 10 (i), p. 122]{BB3} We have
\begin{equation}\label{d42}
\varphi(q)=\sqrt z.
\end{equation}
\end{lemma}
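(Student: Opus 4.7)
The plan is to read off the identity directly from the two expressions for the complete elliptic integral $K(k)$ already recorded in the excerpt (equation \eqref{ee11}) together with the definition of $z$.

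First, I would recall the classical series expansion
\[
K(k) = \frac{\pi}{2}\sum_{n=0}^{\infty}\frac{\left(\tfrac{1}{2}\right)_n^2}{(n!)^2}\,k^{2n},
\]
which is obtained from the integral definition of $K(k)$ by expanding $(1-k^2\sin^2\phi)^{-1/2}$ via the generalized binomial theorem and integrating term by term using the Wallis-type formula $\int_0^{\pi/2}\sin^{2n}\phi\,d\phi = \frac{\pi}{2}\frac{(1/2)_n}{n!}$. Substituting $\alpha = k^2 = x$ and comparing with the hypergeometric series shows
\[
K(k) \;=\; \frac{\pi}{2}\,{}_2F_1\!\left(\tfrac{1}{2},\tfrac{1}{2};1;x\right) \;=\; \frac{\pi}{2}\,z.
\]

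Next, I would invoke the theta-function representation of $K$, namely the identity $K(k)=\frac{\pi}{2}\varphi^2(q)$ for $q=e^{-\pi K(k')/K(k)}$, also contained in \eqref{ee11}. Equating the two evaluations gives $\frac{\pi}{2}\varphi^2(q) = \frac{\pi}{2}z$, and hence $\varphi^2(q) = z$. Since $\varphi(q) > 0$ for $0<q<1$ (it is a sum of positive terms) and $z>0$ on the relevant range $0<x<1$, taking positive square roots yields
\[
\varphi(q) \;=\; \sqrt{z},
\]
as required.

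There is essentially no obstacle here: the lemma is a direct restatement of \eqref{ee11} once $z$ is identified with ${}_2F_1(\tfrac{1}{2},\tfrac{1}{2};1;\alpha)$. The only point worth recording carefully is the sign choice of the square root, which is fixed by the positivity of both sides in the standard range of $q$ (equivalently, the standard range of $\alpha$). All nontrivial content (the equivalence of the integral, series, and theta-function expressions for $K$) is classical and is exactly what is cited from Berndt's text.
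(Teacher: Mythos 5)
Your proof is correct and matches the intended justification: the paper states this lemma as a direct citation of Berndt's Entry 10(i) without proof, and its content is precisely the identification $\varphi^2(q)=z$ already contained in \eqref{ee11} once the series there is recognized as ${}_2F_1\left(\tfrac{1}{2},\tfrac{1}{2};1;k^2\right)$. Your handling of the square-root sign via positivity of $\varphi(q)$ and $z$ is the only point requiring care, and you address it correctly.
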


\noindent We end this section by listing a few values of $r_{2,n}$ found by Yi in her thesis \cite{jy1}.
\begin{lemma}\label{r2n}We have\\
$r_{2,2}=2^{1/8}$,\,\, 
$r_{2,4}=2^{1/8}(\sqrt2+1)^{1/8}$,\,\, 
$r_{2,3}=(1+\sqrt2)^{1/6}$,\,\, 
$r_{2,9}=(\sqrt3+\sqrt2)^{1/3}$,\\ 
$r_{2,8}=2^{3/16}(\sqrt2+1)^{1/4}$,\,\,
$r_{2,6}=2^{1/24}(\sqrt3+1)^{1/4}$,\,\,
$r_{2,5}=\sqrt{\dfrac{1+\sqrt5}{2}}$,\\ 
$r_{2,7}=\sqrt{\dfrac{\sqrt2+1+\sqrt{2\sqrt2+1}}{2}}.$
%
\end{lemma}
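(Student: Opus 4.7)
The plan is to first recast $r_{2,n}$ into its classical avatar. Substituting $k=2$ and $q=e^{-\pi\sqrt{2n}}$ in \eqref{rkn} and using $(q;q)_\infty/(q^2;q^2)_\infty=(q;q^2)_\infty$, one obtains
\[
r_{2,n}=\frac{(q;q^2)_\infty}{2^{1/4}q^{1/24}},\qquad q=e^{-\pi\sqrt{2n}},
\]
which is exactly Ramanujan's class invariant $g_{2n}$. With this identification the lemma reduces to tabulating $g_{4},g_{6},\dots,g_{18}$, a classical computation in the theory of singular moduli.

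For the individual values I would use the formula $g_N^{12}=k_N'^{\,2}/(4k_N)$, a direct consequence of \eqref{d51}--\eqref{d42} together with the factorization $\chi(-q)^2=\varphi(-q)/f(-q^2)$, which expresses each $g_{2n}$ as an explicit radical in the singular modulus $\alpha_{2n}=k_{2n}^2$. The singular moduli are then bootstrapped from the modular equations in the preceding lemma. From the base values $\alpha_1=1/2$ and $\alpha_2=3-2\sqrt{2}$ one applies \eqref{d2} (degree $2$) to pass from $\alpha_N$ to $\alpha_{4N}$, yielding $\alpha_4,\alpha_8,\alpha_{16}$ and hence $g_4,g_8,g_{16}$; \eqref{d3} (degree $3$) to reach $\alpha_{18}$ and $g_{18}$; \eqref{d2} combined with \eqref{d3} to obtain $\alpha_6$ and $\alpha_{12}$; and finally \eqref{d7} and \eqref{d6} (degrees $5$ and $7$) at the self-dual point $N=2$ to pin down $\alpha_{10}$ and $\alpha_{14}$. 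In each case the polynomial modular equation admits several algebraic roots, and the correct branch is selected by comparison with the numerical value of $q^{-1/24}\chi(-q)$.

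The principal difficulty lies in the degree-$5$ and degree-$7$ steps. For $g_{10}=r_{2,5}$ one rewrites \eqref{d7} in the symmetric variable $(\alpha\beta(1-\alpha)(1-\beta))^{1/12}$ and exploits the reciprocity $\alpha_{2/5}=1-\alpha_{10}$ to reduce the equation to a quadratic whose positive root gives $\sqrt{(1+\sqrt{5})/2}$. For $g_{14}=r_{2,7}$ the analogous manipulation of \eqref{d6} yields a quartic in $(\alpha\beta)^{1/8}$, whose solution produces the nested radical $\sqrt{2\sqrt{2}+1}$; this is algebraically the most delicate step, and one I would verify numerically to fix the correct algebraic branch before attempting the symbolic simplification. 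The remaining values follow by direct substitution once the singular moduli are in hand, and the same computations are recorded in detail in Yi's thesis \cite{jy1}, which is the source cited in the statement.
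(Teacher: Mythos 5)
A preliminary remark: the paper does not actually prove this lemma --- it simply imports the eight values from Yi's thesis \cite{jy1} --- so your sketch is attempting genuinely more than the source does. Your starting point is sound: putting $k=2$ in \eqref{rkn} gives $r_{2,n}=2^{-1/4}q^{-1/24}(q;q^2)_\infty$ at $q=e^{-\pi\sqrt{2n}}$, which is indeed the Weber--Ramanujan class invariant $g_{2n}$, and reducing the lemma to the classical table of $g_4,\dots,g_{18}$ is a legitimate (if not the paper's) route.

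However, two of the formulas you lean on are wrong as stated, and both would derail the computation if followed literally. First, the class-invariant formula should be $g_N^{12}=k_N'^{\,2}/(2k_N)$, not $k_N'^{\,2}/(4k_N)$: from $\chi(-q)=(q;q^2)_\infty=2^{1/6}(1-\alpha)^{1/12}(\alpha/q)^{-1/24}$ one gets $g_N=2^{-1/12}(1-\alpha)^{1/12}\alpha^{-1/24}$, hence $g_N^{12}=(1-\alpha)/(2\sqrt{\alpha})$. Your version shifts every value by a factor of $2^{-1/12}$; for instance with $k_4=3-2\sqrt2$ it would yield $g_4=2^{1/24}$ rather than the claimed $r_{2,2}=g_4=2^{1/8}$. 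Second, the reciprocity $\alpha_{2/5}=1-\alpha_{10}$ is false: the correct statement is $\alpha_{1/n}=1-\alpha_n$, so $\alpha_{2/5}=1-\alpha_{5/2}$, which is not what you need. What actually closes the degree-$5$ and degree-$7$ steps is the multiplicative duality $r_{2,n}\,r_{2,1/n}=1$, i.e.\ $g_{2p}\,g_{2/p}=1$; substituting $g^{12}=(1-\alpha)/(2\sqrt{\alpha})$ into \eqref{d7} (resp.\ \eqref{d6}) then produces a one-variable polynomial in $g_{10}$ (resp.\ $g_{14}$), from which the quoted radicals follow after branch selection. With these two corrections your plan goes through. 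It is also worth knowing that Yi's own derivation works directly with modular relations among the $r_{k,n}$ (relations between $r_{2,n}$ and $r_{2,4n}$ or $r_{2,9n}$, together with $r_{2,1}=1$ and the duality above), bypassing singular moduli altogether; that route trades explicit $\alpha$-computations for polynomial factorizations of the kind this paper carries out for $A_{4,n}$ in Section 4.
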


\section{Some new New modular equations} \label{31}
In this section, we establish few theta-function identities involving $\varphi$ and $\psi$ which play key role in establishing some general theorems for explicit evaluations of $A_{4,n}$, $A'_{4,n}$, and $h'_{2,n}$.
\begin{lemma}
If $P:=\dfrac{\varphi (-q)}{q\psi (q^{8})}$ and $R:=\dfrac{\varphi(q)}{q\psi (q^{8})}$, then
\begin{equation}\label{39}
R=P+4.
\end{equation}
\end{lemma}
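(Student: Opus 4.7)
The plan is to observe that the claimed identity is essentially a direct rearrangement of equation \eqref{d8} from the preliminary Lemma, namely
\[
\varphi(q)-\varphi(-q)=4q\psi(q^{8}).
\]
So the entire argument reduces to dividing this known theta-function identity by $q\psi(q^{8})$, which is nonzero for $0<|q|<1$.

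More explicitly, I would first note that $q\psi(q^{8})\neq 0$ (since $\psi(q^{8})=(q^{16};q^{16})_\infty/(q^{8};q^{16})_\infty$ has no zero for $|q|<1$), so the denominators in $P$ and $R$ are legitimate. Then dividing \eqref{d8} through by $q\psi(q^{8})$ yields
\[
\frac{\varphi(q)}{q\psi(q^{8})}-\frac{\varphi(-q)}{q\psi(q^{8})}=\frac{4q\psi(q^{8})}{q\psi(q^{8})}=4,
\]
which is exactly $R-P=4$, i.e.\ \eqref{39}.

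There is no real obstacle here; the lemma is a restatement of \eqref{d8} in the $(P,R)$ notation so that it can be invoked cleanly in subsequent sections (presumably to extract $A_{4,n}$ and $A'_{4,n}$ relations, since the definitions of these parameters involve $\varphi(\pm q)$ and $\psi(q^{2k})$ with $k=4$, i.e.\ $\psi(q^{8})$). The only thing worth being careful about is matching conventions: verifying that the $\psi(q^{8})$ appearing on the right side of \eqref{d8} is literally the same function as in the definitions of $P$ and $R$, which it is.
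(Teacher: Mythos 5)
Your proof is correct and is exactly the paper's argument: the paper simply says ``Invoking \eqref{d8}, we arrive at \eqref{39},'' i.e.\ dividing $\varphi(q)-\varphi(-q)=4q\psi(q^{8})$ by $q\psi(q^{8})$. Your version just makes the (harmless) nonvanishing of the denominator explicit.
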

\begin{proof}
Invoking \eqref{d8}, we arrive at  \eqref{39}.
\end{proof}
\begin{lemma}
If $P:=\dfrac{\varphi (-q)}{\sqrt q\psi (q^{4})}$ and $R:=\dfrac{\varphi(q)}{\sqrt q\psi (q^{4})}$, then
\begin{equation}\label{310}
R^2=P^2+8.
\end{equation}
\end{lemma}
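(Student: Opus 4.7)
The plan is to reduce the desired identity directly to the preliminary theta-function identity \eqref{d9}, namely $\varphi^2(q)-\varphi^2(-q)=8q\psi^2(q^4)$, which was recorded in the first lemma of Section~\ref{21}.

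First I would compute $R^2-P^2$ from the definitions:
\begin{equation*}
R^2-P^2=\frac{\varphi^2(q)}{q\,\psi^2(q^4)}-\frac{\varphi^2(-q)}{q\,\psi^2(q^4)}=\frac{\varphi^2(q)-\varphi^2(-q)}{q\,\psi^2(q^4)}.
\end{equation*}
Next I would substitute \eqref{d9} into the numerator to obtain
\begin{equation*}
R^2-P^2=\frac{8q\,\psi^2(q^4)}{q\,\psi^2(q^4)}=8,
\end{equation*}
which rearranges to $R^2=P^2+8$, as required.

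There is no real obstacle: the statement is an immediate algebraic consequence of \eqref{d9}, exactly parallel to the preceding lemma where the identity \eqref{39} was obtained as a one-line consequence of \eqref{d8}. The only thing to be careful about is keeping track of the factor $q$ in the denominator so that it cancels cleanly with the factor $q$ produced by \eqref{d9}.
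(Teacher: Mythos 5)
Your proposal is correct and is exactly the paper's argument: the paper's proof consists of the single line ``Using \eqref{d9}, we arrive at \eqref{310},'' and your computation simply writes out the division of \eqref{d9} by $q\,\psi^2(q^4)$ explicitly. Nothing further is needed.
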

\begin{proof}
Using \eqref{d9}, we arrive at \eqref{310}.
\end{proof}
\begin{lemma}
If $P:=\dfrac{\varphi (-q)}{ q^{1/4}\psi (q^{2})}$ and $R:=\dfrac{\varphi(q)}{ q^{1/4}\psi (q^{2})}$, then
\begin{equation}\label{311}
R^4=P^4+16.
\end{equation}
\end{lemma}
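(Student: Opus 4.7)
The plan is to observe that this lemma has exactly the same structure as the two preceding lemmas, whose proofs amount to a one-line invocation of a theta-function identity from \eqref{d8}--\eqref{d10}. Here, the relevant ingredient is \eqref{d10}, namely
\begin{equation*}
\varphi^4(q)-\varphi^4(-q)=16q\psi^4(q^2).
\end{equation*}

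First I would raise the defining expressions for $P$ and $R$ to the fourth power, obtaining
\begin{equation*}
R^4=\frac{\varphi^4(q)}{q\,\psi^4(q^2)},\qquad P^4=\frac{\varphi^4(-q)}{q\,\psi^4(q^2)},
\end{equation*}
so that
\begin{equation*}
R^4-P^4=\frac{\varphi^4(q)-\varphi^4(-q)}{q\,\psi^4(q^2)}.
\end{equation*}
Then I would substitute the numerator using \eqref{d10}, giving
\begin{equation*}
R^4-P^4=\frac{16q\,\psi^4(q^2)}{q\,\psi^4(q^2)}=16,
\end{equation*}
which is \eqref{311}.

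There is essentially no obstacle: the identity \eqref{d10} is listed for precisely this purpose, and the powers $q^{1/4}$ in the denominators of $P$ and $R$ are chosen so that after fourth-powering the factors of $q$ and $\psi^4(q^2)$ cancel cleanly. The only thing worth being careful about is making sure the exponents on $q$ in $P$, $R$, and in the right-hand side of \eqref{d10} match up (they do: $(q^{1/4})^4 = q$, exactly the factor of $q$ appearing in $16q\psi^4(q^2)$).
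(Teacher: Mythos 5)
Your proposal is correct and follows exactly the same route as the paper, which simply cites \eqref{d10}; you have merely written out the one-line computation (fourth-powering $P$ and $R$, subtracting, and cancelling $q\psi^4(q^2)$) that the paper leaves implicit.
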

\begin{proof}
Using \eqref{d10}, we arrive at \eqref{311}.
\end{proof}

\begin{lemma}
If $P:=\dfrac{\varphi (-q)}{ q^{1/8}\psi (q)}$ and $R:=\dfrac{\varphi(q)}{ q^{1/8}\psi (q)}$, then
\begin{equation}\label{312}
R^4=P^4+\frac{16}{R^4}.
\end{equation}
\end{lemma}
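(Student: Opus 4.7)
The plan is to combine the identity $\varphi^4(q) - \varphi^4(-q) = 16q\psi^4(q^2)$ from \eqref{d10} with the product identity $\psi^2(q) = \varphi(q)\psi(q^2)$ from \eqref{d11} to rewrite the right-hand side of the difference $R^4 - P^4$ in terms of $R^4$ itself.

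First, I would compute
\begin{equation*}
R^4 - P^4 = \frac{\varphi^4(q) - \varphi^4(-q)}{q^{1/2}\psi^4(q)} = \frac{16q\,\psi^4(q^2)}{q^{1/2}\psi^4(q)} = \frac{16 q^{1/2}\psi^4(q^2)}{\psi^4(q)},
\end{equation*}
using \eqref{d10} directly. The main work is then to re-express the factor $\psi^4(q^2)/\psi^4(q)$ so that it is recognizable as $1/R^4$.

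For this I would square \eqref{d11} to obtain $\psi^4(q) = \varphi^2(q)\psi^2(q^2)$, whence
\begin{equation*}
R^4 - P^4 = \frac{16 q^{1/2}\psi^2(q^2)}{\varphi^2(q)}.
\end{equation*}
On the other hand, the same substitution gives
\begin{equation*}
R^4 = \frac{\varphi^4(q)}{q^{1/2}\psi^4(q)} = \frac{\varphi^4(q)}{q^{1/2}\varphi^2(q)\psi^2(q^2)} = \frac{\varphi^2(q)}{q^{1/2}\psi^2(q^2)},
\end{equation*}
so that $16/R^4 = 16 q^{1/2}\psi^2(q^2)/\varphi^2(q)$, which is exactly $R^4 - P^4$. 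Rearranging yields \eqref{312}.

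There is no real obstacle here; the only choice to make is whether to eliminate $\psi(q)$ via \eqref{d11} before or after applying \eqref{d10}. Applying \eqref{d10} first is cleanest because it turns the numerator into a single $\psi^4(q^2)$ term, after which a single use of \eqref{d11} in the form $\psi^4(q) = \varphi^2(q)\psi^2(q^2)$ exposes the factor $R^4$ in the denominator. The computation parallels the proofs of \eqref{39}, \eqref{310}, \eqref{311}, with the additional ingredient of \eqref{d11} supplying the self-referential $1/R^4$ on the right-hand side.
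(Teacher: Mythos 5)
Your proof is correct and follows essentially the same route as the paper, whose proof of \eqref{312} is the one-line remark ``Using \eqref{d10}, we arrive at \eqref{311}'' (the citation of \eqref{311} there is evidently a typo for \eqref{312}). Your write-up is in fact more complete than the paper's: the extra ingredient \eqref{d11}, which is genuinely needed to convert $\psi^4(q^2)/\psi^4(q)$ into $1/R^4$ and produce the self-referential term, is left implicit in the paper but supplied explicitly and correctly by you.
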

\begin{proof}
Using \eqref{d10}, we arrive at \eqref{311}.
\end{proof}

\begin{lemma}
If $P:=\dfrac{\varphi (-q)}{ \varphi (-q^{2})}$ and $Q:=\dfrac{\varphi(-q)}{ q\psi (q^{8})}$, then
\begin{equation}\label{313}
(Q+4)P^2=Q.
\end{equation}
\end{lemma}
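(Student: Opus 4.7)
The plan is to express $P^2$ directly in terms of $Q$ using the two preliminary identities \eqref{d8} and \eqref{d12}, and then rearrange.

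First I would rewrite $P^2$ using \eqref{d12}: since $\varphi^2(-q^2) = \varphi(q)\varphi(-q)$, we get
\begin{equation*}
P^2 = \frac{\varphi^2(-q)}{\varphi^2(-q^2)} = \frac{\varphi^2(-q)}{\varphi(q)\varphi(-q)} = \frac{\varphi(-q)}{\varphi(q)}.
\end{equation*}
Next, I would use \eqref{d8} in the form $\varphi(q) = \varphi(-q) + 4q\psi(q^8)$ to obtain
\begin{equation*}
\frac{\varphi(q)}{\varphi(-q)} = 1 + \frac{4q\psi(q^8)}{\varphi(-q)} = 1 + \frac{4}{Q} = \frac{Q+4}{Q}.
\end{equation*}
Inverting gives $P^2 = Q/(Q+4)$, which is equivalent to $(Q+4)P^2 = Q$.

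There is no real obstacle here; the only thing to be careful about is using the correct identity in the correct direction, namely \eqref{d12} to turn $\varphi(-q^2)$ squared into a product of $\varphi(q)$ and $\varphi(-q)$, and \eqref{d8} to bring $q\psi(q^8)$ into the denominator as $Q^{-1}$. The whole argument is a two-line algebraic manipulation once these substitutions are set up.
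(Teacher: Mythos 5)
Your proof is correct and follows the same route as the paper: both rewrite \eqref{d12} to get $P^2=\varphi(-q)/\varphi(q)$ and then substitute \eqref{d8} to express $\varphi(q)/\varphi(-q)$ as $(Q+4)/Q$. Your write-up is actually more explicit than the paper's (which compresses the final substitution into one sentence and contains a typographical slip in its concluding reference), but the underlying argument is identical.
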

\begin{proof}
Equation \eqref{d12} can be rewritten as
\begin{align}\label{314}
    \frac{\varphi^2(-q)}{\varphi^2(-q^2)}=\frac{\varphi(-q)}{\varphi(q)}.
\end{align}
Using \eqref{d8} in the above equation, we arrive at \eqref{314}.
\end{proof}

\begin{lemma}
If $P:=\dfrac{\varphi (-q)}{ \varphi (-q^{2})}$ and $Q:=\dfrac{\varphi(-q)}{\sqrt{q}\psi (q^4)}$, then
\begin{equation}\label{315}
(Q^2+8)P^4=Q^2.
\end{equation}
\end{lemma}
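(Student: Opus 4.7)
The plan is to mimic the structure of the previous lemma (the one where $Q = \varphi(-q)/(q\psi(q^8))$), but at the next ``level'' of the doubling identities, replacing the use of \eqref{d8} with \eqref{d9}.

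First I would square $P$ and rewrite it using \eqref{d12}. Since $\varphi(q)\varphi(-q)=\varphi^2(-q^2)$, dividing by $\varphi^2(-q^2)$ gives
\begin{equation*}
P^2 = \frac{\varphi^2(-q)}{\varphi^2(-q^2)} = \frac{\varphi(-q)}{\varphi(q)},
\end{equation*}
and hence
\begin{equation*}
P^4 = \frac{\varphi^2(-q)}{\varphi^2(q)}.
\end{equation*}

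Next I would invoke \eqref{d9} in the form $\varphi^2(q) = \varphi^2(-q) + 8q\psi^2(q^4)$ to rewrite the denominator, yielding
\begin{equation*}
P^4 = \frac{\varphi^2(-q)}{\varphi^2(-q) + 8q\psi^2(q^4)}.
\end{equation*}
Dividing both numerator and denominator of the right-hand side by $q\psi^2(q^4)$ converts everything into the ratio $Q = \varphi(-q)/(\sqrt{q}\,\psi(q^4))$, and we get
\begin{equation*}
P^4 = \frac{Q^2}{Q^2 + 8}.
\end{equation*}
Cross-multiplying produces the claimed identity $(Q^2+8)P^4 = Q^2$.

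There really is no main obstacle here; the identity is just the ``squared'' analogue of \eqref{313}. The only minor care needed is noticing that the correct normalization for $Q$ uses $\sqrt{q}\,\psi(q^4)$, which matches exactly the factor appearing after dividing through by $q\psi^2(q^4)$, so the substitution is clean and no further manipulation is required.
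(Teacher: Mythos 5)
Your proof is correct and follows essentially the same route as the paper: both start from \eqref{d12} in the form $\varphi^2(-q)/\varphi^2(-q^2)=\varphi(-q)/\varphi(q)$, square it to get $P^4=\varphi^2(-q)/\varphi^2(q)$, and then substitute \eqref{d9} to produce $(Q^2+8)P^4=Q^2$. Your write-up is in fact cleaner, since the paper's intermediate displayed equation contains a typographical slip at this step.
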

\begin{proof}
Squaring equation \eqref{314}, we have
\begin{align}\label{316}
    \frac{\varphi^2(q)}{\varphi^2(-q)}=\varphi^4(-q^2).
\end{align}
Using \eqref{d9} in the above equation, we arrive at \eqref{315}.
\end{proof}

\begin{lemma}
If $P:=\dfrac{\varphi (-q)}{ \varphi (-q^{2})}$ and $Q:=\dfrac{\varphi(-q)}{\sqrt[4]{q}\psi (q^2)}$, then
\begin{equation}\label{317}
(Q^4+16)P^8=Q^4.
\end{equation}
\end{lemma}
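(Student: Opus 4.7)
The plan is to follow the same template used in the two preceding lemmas (which handle the $\psi(q^4)$ and $\psi(q^8)$ cases), but now built on identity \eqref{d10} instead of \eqref{d8} or \eqref{d9}. The key idea is that equation \eqref{d12}, namely $\varphi(q)\varphi(-q)=\varphi^2(-q^2)$, gives a clean expression for the ratio $\varphi(q)/\varphi(-q)$ purely in terms of $P$.

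First I would rewrite \eqref{d12} as
\begin{equation*}
\frac{\varphi(q)}{\varphi(-q)}=\frac{\varphi^2(-q^2)}{\varphi^2(-q)}=\frac{1}{P^2},
\end{equation*}
and then raise both sides to the fourth power to obtain $\varphi^4(q)/\varphi^4(-q)=1/P^8$. This is the analogue of the squaring step performed in the proof of Lemma~\ref{315}, pushed one power higher so as to match the exponent appearing in \eqref{d10}.

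Next I would invoke \eqref{d10}, divide through by $\varphi^4(-q)$, and use the previous display to rewrite the left-hand side. Since $Q^4=\varphi^4(-q)/(q\psi^4(q^2))$, the right-hand side $16q\psi^4(q^2)/\varphi^4(-q)$ collapses to $16/Q^4$, and one is left with the simple relation
\begin{equation*}
\frac{1}{P^8}-1=\frac{16}{Q^4}.
\end{equation*}
Multiplying through by $P^8 Q^4$ and regrouping immediately yields \eqref{317}.

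No genuine obstacle is expected here: once the substitution $\varphi(q)/\varphi(-q)=1/P^2$ is made, the identity \eqref{d10} is tailor-made for the combination appearing in $Q$. The only point to be careful about is bookkeeping of exponents, so that the $1/P^8$ (rather than $1/P^4$ or $1/P^2$) correctly matches the fourth-power version of the theta identity.
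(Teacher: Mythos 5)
Your proposal is correct and follows essentially the same route as the paper: use \eqref{d12} to get $\varphi(q)/\varphi(-q)=1/P^2$, raise to the fourth power, and substitute into \eqref{d10} divided by $\varphi^4(-q)$. (If anything, your write-up is cleaner than the paper's, whose intermediate displays \eqref{316} and \eqref{3171} contain typographical slips omitting the denominator $\varphi^{4}(-q)$ resp.\ $\varphi^{8}(-q)$ on the right-hand side.)
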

\begin{proof}
Squaring equation \eqref{316}, we have
\begin{align}\label{3171}
    \frac{\varphi^4(q)}{\varphi^4(-q)}=\varphi^8(-q^2).
\end{align}
Using \eqref{d10} in the above equation, we arrive at \eqref{3171}.
\end{proof}

\begin{lemma}
If $P:=\dfrac{\varphi (-q)}{ \varphi (-q^{2})}$ and $Q:=\dfrac{\varphi(-q)}{\sqrt[8]{q}\psi (q)}$, then
\begin{equation}\label{318}
(Q^8+16P^8)P^8=Q^8.
\end{equation}
\end{lemma}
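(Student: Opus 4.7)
The plan is to mimic the recursive pattern of the preceding lemmas (which repeatedly square the basic relation $P^2=\varphi(-q)/\varphi(q)$ coming from \eqref{d12}) but this time to also invoke the identity \eqref{d11} to convert $\psi(q)$ into $\varphi(q)\psi(q^2)$, after which \eqref{d10} finishes the job exactly as it did in Lemma for equation \eqref{317}.

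First I would record the consequences of the definitions. From \eqref{d12} one has $P^{2}=\varphi(-q)/\varphi(q)$, so $P^{8}=\varphi^{4}(-q)/\varphi^{4}(q)$. From the definition of $Q$ one has $Q^{8}=\varphi^{8}(-q)/\bigl(q\,\psi^{8}(q)\bigr)$. Next, using \eqref{d11} in the form $\psi^{2}(q)=\varphi(q)\psi(q^{2})$ (hence $\psi^{8}(q)=\varphi^{4}(q)\psi^{4}(q^{2})$), rewrite
\[
Q^{8}=\frac{\varphi^{8}(-q)}{q\,\varphi^{4}(q)\,\psi^{4}(q^{2})}.
\]

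Now I would apply \eqref{d10}, which gives $16q\,\psi^{4}(q^{2})=\varphi^{4}(q)-\varphi^{4}(-q)$. Substituting produces
\[
Q^{8}=\frac{16\,\varphi^{8}(-q)}{\varphi^{4}(q)\bigl(\varphi^{4}(q)-\varphi^{4}(-q)\bigr)}.
\]
A short algebraic manipulation then shows $Q^{8}/P^{8}=16\varphi^{4}(-q)/(\varphi^{4}(q)-\varphi^{4}(-q))$ and $Q^{8}+16P^{8}=16\varphi^{4}(-q)\,\varphi^{4}(q)/\bigl(\varphi^{4}(q)(\varphi^{4}(q)-\varphi^{4}(-q))\bigr)\cdot\varphi^{4}(q)/\varphi^{4}(q)$, which after cancellation gives $(Q^{8}+16P^{8})P^{8}=Q^{8}$.

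There is no real obstacle here; the only substantive step is recognizing that \eqref{d11} is exactly the bridge needed to carry the argument of the previous lemma (which dealt with $\psi(q^{2})$) down one level to $\psi(q)$, after which \eqref{d10} yields the identity. The appearance of $16P^{8}$ rather than a constant on the right (as in \eqref{311}, \eqref{315}, \eqref{317}) is precisely the price paid for the extra factor $\varphi^{4}(q)=\psi^{8}(q)/\psi^{4}(q^{2})$ picked up when converting $\psi(q)\to\psi(q^{2})$.
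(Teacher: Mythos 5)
Your proof is correct, but it follows a genuinely different route from the paper's. The paper transcribes $P$ and $Q$ into the classical modulus $\alpha$ via the catalogue formulas: from \eqref{d4} and \eqref{d41} it gets $\alpha=1-P^{8}$, from \eqref{d4} and \eqref{d51} it gets $Q^{8}=16(1-\alpha)^{2}/\alpha$, and then it eliminates $\alpha$ to obtain \eqref{318}. You instead stay entirely at the level of the theta-function identities of Section \ref{21}: \eqref{d12} gives $P^{2}=\varphi(-q)/\varphi(q)$, \eqref{d11} converts $\psi^{8}(q)$ into $\varphi^{4}(q)\psi^{4}(q^{2})$, and \eqref{d10} then eliminates $q\psi^{4}(q^{2})$, after which the computation $Q^{8}(1-P^{8})=16\varphi^{8}(-q)/\varphi^{8}(q)=16P^{16}$ is exactly \eqref{318}. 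Your version is more self-contained and is the natural continuation of the pattern used for \eqref{313}, \eqref{315} and \eqref{317}, and your remark that the extra factor $\varphi^{4}(q)$ coming from \eqref{d11} is what turns the constant $16$ into the term $16P^{8}$ is a genuine explanatory gain; it also sidesteps any reliance on the exact exponents in \eqref{d4}--\eqref{d41} (which, as printed in the paper, appear with $1/8$ and $1/4$ interchanged relative to Berndt's Entry 10, so the paper's derivation of $\alpha=1-P^{8}$ only goes through with the corrected catalogue values). The paper's $\alpha$-parametrization, on the other hand, is the general-purpose machine used for the degree $2$, $3$, $5$, $7$ modular equations later on, so its use here keeps the method uniform. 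The only blemish in your write-up is the garbled display for $Q^{8}+16P^{8}$ (a spurious factor $\varphi^{4}(q)/\varphi^{4}(q)$); the intended computation, reducing $Q^{8}+16P^{8}$ to $16\varphi^{4}(-q)/(\varphi^{4}(q)-\varphi^{4}(-q))=Q^{8}/P^{8}$, is correct and completes the proof.
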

\begin{proof}
From \eqref{d4} and \eqref{d41}, we have
\begin{equation}\label{319}
\alpha=1-P^8.
\end{equation}
Using equations \eqref{d4} and \eqref{d51}, we get
\begin{equation}\label{320}
Q^8=16\frac{(1-\alpha)^2}{\alpha}.
\end{equation}
Substituting for $\alpha$ from \eqref{319} in the above equation, we arrive at \eqref{318}.
\end{proof}

\begin{theorem}
If $P:=\dfrac{\varphi (q)}{q^{3/4}\psi (q^{6})}$ and $Q:=\dfrac{\varphi (q)}{\varphi(q^{3})}$, then
\begin{equation}\label{ad32}
(Q^8-6Q^4+8Q^2-3)P^4=256Q^6.
\end{equation}
\end{theorem}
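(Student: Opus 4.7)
The plan is to translate both $P$ and $Q$ into the classical $(\alpha,\beta,m)$-parametrization, where $\beta$ has degree $3$ over $\alpha$, and then exploit the explicit relation \eqref{d121} between $\beta$ and the multiplier $m$.

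First I would use Entry~10(i), i.e.\ \eqref{d42}, so that $\varphi(q)=\sqrt{z_1}$ and $\varphi(q^3)=\sqrt{z_3}$, where $z_1={}_2F_1(1/2,1/2;1;\alpha)$ and $z_3={}_2F_1(1/2,1/2;1;\beta)$. By the definition of the multiplier in \eqref{a1} this yields
\[
Q=\frac{\varphi(q)}{\varphi(q^3)}=\sqrt{m},\qquad\text{so}\qquad Q^2=m.
\]
Next I would evaluate $\psi(q^6)$ by replacing $q$ with $q^3$ in \eqref{d52}, obtaining
\[
\psi(q^6)=\frac{\sqrt{z_3}}{2}\left(\frac{\beta}{q^3}\right)^{1/4}.
\]
Plugging this into the definition of $P$, the factor $q^{3/4}$ cancels cleanly against $q^{-3/4}$ coming from $(\beta/q^3)^{1/4}$, so
\[
P=\frac{\sqrt{z_1}}{q^{3/4}\cdot\frac{\sqrt{z_3}}{2}(\beta/q^3)^{1/4}}=\frac{2\sqrt{m}}{\beta^{1/4}},
\qquad\text{hence}\qquad P^{4}=\frac{16m^{2}}{\beta}=\frac{16Q^{4}}{\beta}.
\]

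Now the degree-3 information enters. By \eqref{d121},
\[
\beta=\frac{(m-1)^{3}(3+m)}{16m},
\]
so substituting gives
\[
P^{4}=\frac{16Q^{4}\cdot 16m}{(m-1)^{3}(3+m)}=\frac{256\,Q^{6}}{(Q^{2}-1)^{3}(Q^{2}+3)}.
\]
Finally I would verify by direct expansion that $(Q^{2}-1)^{3}(Q^{2}+3)=Q^{8}-6Q^{4}+8Q^{2}-3$, which rearranges to the desired identity $(Q^{8}-6Q^{4}+8Q^{2}-3)P^{4}=256Q^{6}$.

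The only step that requires any care is the second one, making sure that the $q^{3/4}$ weight baked into $P$ is precisely matched by the $q^{-3/4}$ produced by $\psi(q^6)$ via \eqref{d52}; everything else is routine once $P$ and $Q$ are written in terms of $m$ and $\beta$, and the heavy lifting is done for us by \eqref{d121}. The polynomial $(Q^{2}-1)^{3}(Q^{2}+3)$ expansion is the only nontrivial piece of algebra, and it is short.
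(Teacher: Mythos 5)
Your proposal is correct and follows essentially the same route as the paper: both express $P=2\sqrt{m}/\beta^{1/4}$ and $Q=\sqrt{m}$ via \eqref{d42} and \eqref{d52}, substitute the degree-3 relation \eqref{d121}, and expand $(Q^2-1)^3(Q^2+3)=Q^8-6Q^4+8Q^2-3$. Your write-up is in fact cleaner than the paper's, which leaves the intermediate identity in unexpanded form with a typographical slip.
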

\begin{proof}
Using \eqref{d52} and \eqref{d42}, we have
\begin{align}\label{d345}
P=\frac{2\sqrt m}{\beta^{1/4}}\quad \textrm{and} \quad Q=\sqrt m.
\end{align}
Invoking \eqref{d121} the above equation can be written as
\begin{align}\label{d3451}
\frac{P^4(Q^2-1)^3(3+Q^2)}{16Q^2})=(2Q)^4.
\end{align}
On factorizing above equation, we arrive at \eqref{ad32} which completes the proof.
\end{proof}
\begin{remark}
By transcribing \eqref{ad32} using the definition of $A'_{3,n}$ and $h_{3,n}$ we can find evaluation of $A'_{3,n}$ for some positive rationals $n$ by using the values of $h_{3,n}.$
\end{remark}
\begin{theorem}
If $P:=\dfrac{\varphi (-q)}{q\psi (q^{8})}$ and $Q:=\dfrac{\varphi (-q^2)}{q^2\psi (q^{16})}$, then
\begin{equation}\label{32}
P+\frac{8}{Q}+\frac{2P}{Q}+4=\frac{Q}{P}.
\end{equation}
\end{theorem}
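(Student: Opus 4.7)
The plan is to clear denominators in \eqref{32}. Multiplying through by $PQ$ and collecting terms, the identity becomes $P^2Q + 2P^2 + 4PQ + 8P = Q^2$, which factors neatly as
\[
P(P+4)(Q+2) = Q^2.
\]
I would prove this factored form directly, evaluating each factor on the left in terms of theta-functions and comparing with the right.

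For the factor $P(P+4)$, identity \eqref{d8} gives $\varphi(-q) + 4q\psi(q^8) = \varphi(q)$, so $P + 4 = \varphi(q)/(q\psi(q^8))$. Combining with \eqref{d12} yields
\[
P(P+4) = \frac{\varphi(-q)\varphi(q)}{q^2\psi^2(q^8)} = \frac{\varphi^2(-q^2)}{q^2\psi^2(q^8)}.
\]
Since $Q^2 = \varphi^2(-q^2)/(q^4\psi^2(q^{16}))$, the target identity reduces to $Q + 2 = \psi^2(q^8)/(q^2\psi^2(q^{16}))$. Substituting the definition of $Q$ and clearing denominators, this becomes
\[
\varphi(-q^2)\psi(q^{16}) + 2q^2\psi^2(q^{16}) = \psi^2(q^8).
\]
Applying \eqref{d11} with $q$ replaced by $q^8$ gives $\psi^2(q^8) = \varphi(q^8)\psi(q^{16})$, so after cancelling $\psi(q^{16})$ the problem reduces to the dissection identity
\[
\varphi(-q^2) + 2q^2\psi(q^{16}) = \varphi(q^8).
\]

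Finally, to derive this dissection identity from the listed lemmas alone, I would apply \eqref{d8} with $q$ replaced by $q^2$ to obtain $\varphi(q^2) - \varphi(-q^2) = 4q^2\psi(q^{16})$, and apply \eqref{d9} with $q$ replaced by $q^2$ to obtain $\varphi^2(q^2) - \varphi^2(-q^2) = 8q^2\psi^2(q^8)$. Dividing the second by the first and invoking \eqref{d11} (again with $q\to q^8$) yields $\varphi(q^2) + \varphi(-q^2) = 2\varphi(q^8)$. Subtracting the linear relation from this sum gives the required dissection, and the proof is complete. The main obstacle is spotting the factorization $P(P+4)(Q+2) = Q^2$; once that is in hand, everything collapses into a short chain of applications of the basic theta-function identities in Lemma~2.1.
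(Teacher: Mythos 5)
Your proof is correct, and it takes a genuinely different route from the paper's. The paper transcribes $P$ and $Q$ into the moduli $\alpha$ and $\beta$ (of degree $2$ over $\alpha$) via the catalogue formulas for $\varphi(-q)$ and $\psi(q^8)$, obtaining $\sqrt{1-\alpha}=\bigl(P/(P+4)\bigr)^2$ and $\beta=1-\bigl(Q/(Q+4)\bigr)^4$, substitutes these into Ramanujan's degree-two modular equation \eqref{d2}, and then factors the resulting polynomial in $P,Q$, discarding a second factor after arguing it cannot vanish for $|q|<1$. You instead clear denominators, observe the clean factorization $P(P+4)(Q+2)=Q^2$ (which is indeed equivalent to the paper's surviving factor $P^2Q+2P^2+4PQ+8P-Q^2=0$), and reduce everything to the $2$-dissection $\varphi(-q^2)+2q^2\psi(q^{16})=\varphi(q^8)$, which you correctly derive from \eqref{d8}, \eqref{d9}, \eqref{d11} and \eqref{d12} alone; every intermediate identity checks out. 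Your argument is more elementary: it bypasses the modular-equation machinery and the transcription formulas entirely (a small bonus, since the paper's \eqref{d4}--\eqref{d41} as printed have their exponents $1/8$ and $1/4$ interchanged, though the proof of \eqref{32} uses the correct versions), and it needs no nonvanishing argument for a spurious factor. What the paper's method buys in exchange is uniformity: the same transcription template is reused verbatim for the degree $3$, $5$ and $7$ relations \eqref{35}, \eqref{372} and \eqref{373} later in the section, where no comparably simple dissection identity is available.
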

\begin{proof}
Transcribing $P$ and $Q$ by using \eqref{d4} and \eqref{d5}, we obtain \begin{equation}\begin{split}\label{133}
\beta=1-\left(\frac{Q}{Q+4}\right)^4\ \ \ \textrm{and}\ \ \ \sqrt{1-\alpha}=\left(\frac{P}{P+4}\right)^2.
\end{split}
\end{equation}
Employing \eqref{133} in the equation \eqref{d2}, we arrive at
\begin{equation}\begin{split}\label{34}
&(-Q^2+4QP+8P+QP^2+2P^2)(Q^2P^2+4Q^2P+4Q^2+16QP\\&+32P+4QP^2+8P^2)=0.
\end{split}
\end{equation}
It is observed that for $|q|<1$, the second factor $Q^2P^2+4Q^2P+4Q^2+16QP+32P+4QP^2+8P^2\neq 0$. Thus the first factor of \eqref{34}
\begin{align*}
4QP+8P+QP^2+2P^2-Q^2=0.
\end{align*}
Dividing the above equation by $PQ$ and then rearranging, we arrive at \eqref{32}.
\end{proof}
\noindent Throughout this section, we set 
\begin{align}\label{abn}
A_n:=\frac{\varphi(-q)\varphi(-q^n)}{q^{n+1}\psi(q^8)\psi(q^{8n})}\qquad \textrm{and} \qquad B_n:=\frac{\varphi(-q)\psi(q^{8n})}{q^{1-n}\varphi(-q^n)\psi(q^8)}.
\end{align}

\begin{theorem}We have 
\begin{equation}\label{35}\begin{split}
&B^2_3+\frac{1}{B^2_3}=12\left(B_3+\frac{1}{B_3}\right)+\left(A_3+\frac{8^2}{A_3}\right)\\&+6\left(\sqrt{A_3}+\frac{8}{\sqrt{A_3}}\right)\left(\sqrt{B_3}+\frac{1}{\sqrt{B_3}}\right)+30.
\end{split}\end{equation}
\end{theorem}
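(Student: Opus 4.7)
The plan is to parameterize everything by the pair $P := \varphi(-q)/(q\psi(q^8))$ and $P_3 := \varphi(-q^3)/(q^3\psi(q^{24}))$. From the definitions in \eqref{abn}, direct cancellation yields $A_3 = PP_3$ and $B_3 = P/P_3$. Arguing exactly as in the proof establishing \eqref{32}, I would transcribe $P$ and $P_3$ via \eqref{d4}, \eqref{d5} and \eqref{d42} to obtain $(1-\alpha)^{1/4} = P/(P+4)$ and $(1-\beta)^{1/4} = P_3/(P_3+4)$, where $\beta$ has degree $3$ over $\alpha$. Writing $\alpha = [(P+4)^4 - P^4]/(P+4)^4 = 16(P+2)(P^2+4P+8)/(P+4)^4$ then gives a clean expression for $\alpha^{1/4}$, and similarly for $\beta^{1/4}$.

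The next step is to feed these into the modular equation of degree $3$ recorded in \eqref{d3}, $(\alpha\beta)^{1/4}+((1-\alpha)(1-\beta))^{1/4}=1$. Clearing the common denominator $(P+4)(P_3+4)$ and isolating the remaining radical gives
\begin{equation*}
4\,[(P+2)(P_3+2)(P^2+4P+8)(P_3^2+4P_3+8)]^{1/4} = (P+4)(P_3+4) - PP_3 = 4(P+P_3+4),
\end{equation*}
so after raising to the fourth power one obtains the polynomial identity
\begin{equation*}
(P+2)(P_3+2)(P^2+4P+8)(P_3^2+4P_3+8) = (P+P_3+4)^4. \quad (\star)
\end{equation*}

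The final step is to translate $(\star)$ into $A_3$ and $B_3$ via $P = \sqrt{A_3B_3}$ and $P_3 = \sqrt{A_3/B_3}$. Setting $w := \sqrt{A_3}$ and $u := \sqrt{B_3}+1/\sqrt{B_3}$, one has $PP_3=w^2$, $P+P_3=wu$, $P^2+P_3^2=w^2(u^2-2)$, so $(P+2)(P_3+2)=w^2+2wu+4$ and $(P^2+4P+8)(P_3^2+4P_3+8)=w^4+4w^3u+8w^2u^2+32wu+64$. The expansion of the difference of the two sides of $(\star)$ then factors as $w^2$ times the quartic $w^4+6w^3u+(16u^2-u^4+4)w^2+48wu+64$. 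Dividing by $w^2$, rewriting $w^2+64/w^2=X^2-16$ with $X:=\sqrt{A_3}+8/\sqrt{A_3}$, and using $u^2=(B_3+1/B_3)+2$ collapses the identity to $B_3^2+1/B_3^2 = (A_3+64/A_3)+12(B_3+1/B_3)+6Xu+30$, which is \eqref{35}.

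The main obstacle is this last step. The substitutions $P=\sqrt{A_3B_3}$ and $P_3=\sqrt{A_3/B_3}$ introduce square roots on both sides of the polynomial identity $(\star)$, and one must verify that after full expansion every asymmetric term pairs up to leave a clean polynomial in the symmetric invariants $X$, $u$ and $Y:=B_3+1/B_3$, and that the spurious overall factor of $w^2 = A_3$ drops out. Recognising the combinations $\sqrt{A_3}+8/\sqrt{A_3}$ and $\sqrt{B_3}+1/\sqrt{B_3}$ in the residue, from which the integer coefficients $12$, $6$, $8$ and $30$ of the theorem descend, requires careful symmetric bookkeeping rather than any further theta-function input.
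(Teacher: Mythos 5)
Your proposal is correct and follows essentially the same route as the paper: both transcribe $P=\varphi(-q)/(q\psi(q^8))$ and $P_3=\varphi(-q^3)/(q^3\psi(q^{24}))$ via the catalogue to get $(1-\alpha)^{1/4}=P/(P+4)$, $(1-\beta)^{1/4}=P_3/(P_3+4)$, substitute into the degree-$3$ relation \eqref{d3}, and then pass to $A_3=PP_3$, $B_3=P/P_3$. The only difference is that you carry out explicitly the polynomial bookkeeping (the identity $(\star)$ and its reduction in the invariants $w$, $u$, $X$) that the paper leaves implicit; I checked that reduction and it does yield \eqref{35} exactly.
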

\begin{proof}
Let us begin the proof by setting 
$$P:=\dfrac{\varphi (-q)}{q\psi (q^{8})}\,\,\, \textrm{and}\,\,\, Q:=\dfrac{\varphi (-q^3)}{q^3\psi (q^{24})}.$$
Transcribing $P$ and $Q$ by using the \eqref{d4} and \eqref{d5}, we obtain \begin{equation}\begin{split}\label{33}
\beta=1-\left(\frac{Q}{Q+4}\right)^4\ \ \ \textrm{and}\ \ \ \alpha=1-\left(\frac{P}{P+4}\right)^4.
\end{split}
\end{equation}
Ramanujan's modular equations of degree three in \eqref{d3} can be written as
\begin{equation}\begin{split}\label{36}
&\alpha\beta=\left(1-\{(1-\alpha)(1-\beta)\}^{1/4}\right)^4.
\end{split}
\end{equation}
Again, invoking \eqref{d4} and \eqref{d5} in the above equality and set $A_3:=PQ$ and $B_3=P/Q$, we arrive at \eqref{35} to complete the proof.
\end{proof}


\begin{theorem}We have 
\begin{equation}\label{372}\begin{split}
& B^3_5+\frac{1}{B^3_5}-1620=70\left(B^2_5+\frac{1}{B^2_5}\right)+785\left(B_5+\frac{1}{B_5}\right)+\left(A_5^2+\frac{8^4}{A_5^2}\right)\\& +80\left(\sqrt{A_5}+\frac{8}{A_5}\right)\left[5\left(\sqrt{B_5}+\frac{1}{\sqrt{B_5}}\right)+\left(\sqrt{B^3_5}+\frac{1}{\sqrt{B^3_5}}\right)\right]\\& +20\left(A_5+\frac{8^2}{A_5}\right)\left[5+2\left(B_5+\frac{1}{B_5}\right)\right]+10\left(\sqrt{A_5^3}+\frac{8^3}{\sqrt{A_5^3}}\right)\left(\sqrt{B_5}+\frac{1}{B_5}\right).
\end{split}\end{equation}
\end{theorem}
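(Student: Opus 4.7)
The plan is to follow the template used for the degree-$3$ case \eqref{35} proved above. Set
\begin{equation*}
P := \frac{\varphi(-q)}{q\,\psi(q^{8})} \qquad \textrm{and} \qquad Q := \frac{\varphi(-q^{5})}{q^{5}\,\psi(q^{40})},
\end{equation*}
so that $A_{5}=PQ$ and $B_{5}=P/Q$ directly from \eqref{abn}. The next step is to transcribe $P$ and $Q$ in terms of the classical moduli $\alpha$ and $\beta$ (with $\beta$ of degree $5$ over $\alpha$) via \eqref{d4}, \eqref{d5} and \eqref{d42}, producing expressions for $\alpha, 1-\alpha$ in terms of $P$ and for $\beta, 1-\beta$ in terms of $Q$ analogous to \eqref{33}.

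With those substitutions in hand, invoke Ramanujan's degree-$5$ modular equation \eqref{d7}. The presence of the sixth-root term $2(\alpha\beta(1-\alpha)(1-\beta))^{1/6}$ forces a two-step rationalisation: isolate the radical, cube the equation to remove the sixth root, then square the remaining expression to clear the half-powers coming from $\sqrt{\alpha\beta}$ and $\sqrt{(1-\alpha)(1-\beta)}$. This yields a polynomial identity in $P$ and $Q$, which (by analogy with the degree-$3$ case) is expected to split as a product of an extraneous factor, manifestly nonzero for $|q|<1$ and therefore discarded, and the genuine modular relation in $P,Q$.

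The last step is symmetrisation: every combination of powers of $P$ and $Q$ appearing in the surviving factor is rewritten in terms of $A_{5}=PQ$ and $B_{5}=P/Q$ using identities such as $P^{a}Q^{-a}+P^{-a}Q^{a}=B_{5}^{a}+B_{5}^{-a}$ and $(PQ)^{b}+(PQ)^{-b}=A_{5}^{b}+A_{5}^{-b}$, while mixed monomials correspond to products like $\sqrt{A_{5}^{k}}\,\sqrt{B_{5}^{\ell}}$. Matching coefficients should then reproduce \eqref{372}. The main obstacle is the algebraic bulk: rationalising the sixth-root term yields a polynomial in $P,Q$ of markedly higher total degree than in the degree-$3$ case, so a computer algebra system will almost certainly be required both to perform the factorisation correctly and to verify that the large integer coefficients ($1620$, $785$, $80$, $20$, $10$, etc.) on the right-hand side of \eqref{372} emerge correctly after the repackaging into $A_{5}$ and $B_{5}$.
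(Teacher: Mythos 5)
Your proposal follows exactly the route the paper intends: the paper in fact omits this proof entirely, stating only that it is ``similar to the proof of \eqref{35}, except that we use Ramanujan's modular equation of degree five \eqref{d7}.'' Your outline --- setting $P=\varphi(-q)/(q\psi(q^8))$, $Q=\varphi(-q^5)/(q^5\psi(q^{40}))$, transcribing via \eqref{d4} and \eqref{d5}, rationalising the sixth-root term in \eqref{d7}, discarding the extraneous factor, and symmetrising in $A_5=PQ$ and $B_5=P/Q$ --- is a correct and more detailed account of that same argument.
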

\begin{proof}
The proof of the \eqref{372} is similar to the proof of the equation \eqref{33}, except that  we use Ramanujan's modular equations of degree five \eqref{d7}, hence we omit the proof.
\end{proof}

\begin{theorem}We have
\begin{equation}\label{373}\begin{split}
& B^4_7+\frac{1}{B^4_7}=280\left(B^3_7+\frac{1}{B^3_7}\right)+9772\left(B^2_7+\frac{1}{B^2_7}\right)+60424\left(B_7+\frac{1}{B_7}\right)\\&\left(A_7^3+\frac{8^6}{A_7^3}\right)+\left(A_7^2+\frac{8^4}{A_7^2}\right)\left[203+84\left(B_7+\frac{1}{B_7}\right)\right]+28\left(\sqrt{A_7}+\frac{8}{\sqrt A_7}\right)\\&\times \left[1030\left(\sqrt B_7+\frac{1}{\sqrt B_7}\right)+313\left(\sqrt{B^3_7}+\frac{1}{\sqrt{B^3_7}}\right)+21\left(\sqrt{B^5_7}+\frac{1}{\sqrt{B^5_7}}\right)\right]\\&+140\left(\sqrt{A_7^3}+\frac{8^3}{\sqrt {A_7^3}}\right)\left[9\left(\sqrt B_7+\frac{1}{\sqrt B_7}\right) +2\left(\sqrt{B^3_7}+\frac{1}{\sqrt{B^3_7}}\right)\right]\\&+\left(A_7+\frac{8^2}{A_7}\right)\left[8092+4340\left(B_7+\frac{1}{B_7}\right)+546\left(B^2_7+\frac{1}{B^2_7}\right)\right]\\&+14\left(\sqrt{A_7^5}+\frac{8^5}{A_7^5}\right)\left(\sqrt{B_7}+\frac{1}{B_7}\right)+106330.\end{split}\end{equation}
\end{theorem}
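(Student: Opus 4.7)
The plan is to follow the template set by the proof of \eqref{35} (and invoked for \eqref{372}), replacing the degree-three modular equation by the degree-seven identity \eqref{d6}.

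First I would set
\[
P:=\frac{\varphi(-q)}{q\,\psi(q^{8})}\qquad\text{and}\qquad Q:=\frac{\varphi(-q^{7})}{q^{7}\psi(q^{56})},
\]
so that in the notation of \eqref{abn} one has $PQ=A_{7}$ and $P/Q=B_{7}$. Using \eqref{d4}, \eqref{d5}, and \eqref{d42}, I would transcribe $P$ and $Q$ in a manner analogous to \eqref{133}, obtaining $\alpha$ and $1-\alpha$ as explicit rational functions of $P$, and $\beta$ and $1-\beta$ as explicit rational functions of $Q$ (the latter derived by applying the same lemmas at the nome $q^{7}$, with $\beta$ the degree-$7$ modulus corresponding to $\alpha$).

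Next I would substitute these expressions into the degree-seven Ramanujan modular equation \eqref{d6}, namely
\[
(\alpha\beta)^{1/8}+\bigl(\alpha\beta(1-\alpha)(1-\beta)\bigr)^{1/8}=1.
\]
Isolating the second radical and raising both sides to the eighth power eliminates the fractional exponents; clearing denominators then yields a polynomial identity $F(P,Q)=0$. As in the step leading to \eqref{34}, $F$ should factor, and one discards the factor that cannot vanish for $|q|<1$.

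The final step is to rewrite the surviving factor symmetrically in $A_{7}=PQ$ and $B_{7}=P/Q$. Each monomial $P^{a}Q^{b}$ equals $A_{7}^{(a+b)/2}B_{7}^{(a-b)/2}$, so pairings of the form $P^{a}Q^{b}+P^{b}Q^{a}$ produce the symmetric combinations $B_{7}^{k}+B_{7}^{-k}$ and $\sqrt{B_{7}^{k}}+1/\sqrt{B_{7}^{k}}$ visible in \eqref{373}, while the $\alpha\leftrightarrow 1-\alpha$, $\beta\leftrightarrow 1-\beta$ symmetry of \eqref{d6} is responsible for the paired coefficients $A_{7}^{j}+8^{2j}/A_{7}^{j}$. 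The main obstacle is purely computational: the eighth power of the modular equation, after substitution of the rational expressions for $\alpha,\beta$, produces a very large polynomial in $P,Q$, and matching its terms against the nested hierarchy of scale ($A_{7}$) and ratio ($B_{7}$) variables in \eqref{373} is best handled with a symbolic algebra system. No new conceptual difficulty arises beyond that already faced in the degree-five case.
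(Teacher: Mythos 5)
Your proposal matches the paper's own (omitted) proof exactly: the authors state that \eqref{373} is obtained just as \eqref{35} was, by transcribing $P:=\varphi(-q)/(q\psi(q^{8}))$ and $Q:=\varphi(-q^{7})/(q^{7}\psi(q^{56}))$ via \eqref{d4} and \eqref{d5} into the degree-seven modular equation \eqref{d6} and then rewriting the result in terms of $A_{7}=PQ$ and $B_{7}=P/Q$. Your identifications of $A_{7}$ and $B_{7}$ agree with \eqref{abn}, and the remaining work is, as you say, a routine (if heavy) elimination of radicals and symmetrization.
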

\begin{proof}
The proof of the \eqref{372} is similar to the proof of the equation \eqref{33}, except that  we use Ramanujan's modular equations of degree seven \eqref{d6}, hence we omit the proof.
\end{proof}

\section{Explicit evaluation of $A_{k,n}$ and $A'_{k,n}$}\label{41}
In this section, we establish some general theorems for explicit evaluation of $A_{k,n}$ and $A'_{k,n}$ by using the modular equations established in Section \ref{31}. 
\begin{lemma}For any positive rational $n$, we have 
\begin{align}\label{a1by2r2n}
A_{1/2,n}=r^3_{2,n}.
\end{align}
\end{lemma}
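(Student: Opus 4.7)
The plan is to unwrap both sides directly from their definitions and reduce the identity to a known product-form relation between $\varphi(-q)$, $\psi(q)$, and $f(-q)$. First I would check that the two nome assignments match: in $A_{1/2,n}$ the nome is $q=e^{-\pi\sqrt{n/(1/2)}}=e^{-\pi\sqrt{2n}}$, while in $r_{2,n}$ we have $q=e^{-2\pi\sqrt{n/2}}=e^{-\pi\sqrt{2n}}$, so both sides are evaluated at the same $q$ and no base change is needed.

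Next, substituting $k=1/2$ in \eqref{ak1} gives
\begin{equation*}
A_{1/2,n}=\frac{\varphi(-q)}{2\cdot(1/2)^{1/4}q^{1/8}\psi(q)}=\frac{\varphi(-q)}{2^{3/4}q^{1/8}\psi(q)},
\end{equation*}
while from \eqref{rkn} with $k=2$,
\begin{equation*}
r_{2,n}^{3}=\frac{f(-q)^{3}}{2^{3/4}q^{1/8}f(-q^{2})^{3}}.
\end{equation*}
Hence the equality $A_{1/2,n}=r_{2,n}^{3}$ is equivalent to the $q$-product identity
\begin{equation*}
\frac{\varphi(-q)}{\psi(q)}=\frac{f(-q)^{3}}{f(-q^{2})^{3}}.
\end{equation*}

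To establish this, I would use the product representations recorded at the start of the introduction. From $\varphi(q)=(-q;q^{2})_{\infty}^{2}(q^{2};q^{2})_{\infty}$ and $(q;q^{2})_{\infty}=(q;q)_{\infty}/(q^{2};q^{2})_{\infty}$, one obtains $\varphi(-q)=f(-q)^{2}/f(-q^{2})$; similarly, $\psi(q)=(q^{2};q^{2})_{\infty}/(q;q^{2})_{\infty}=f(-q^{2})^{2}/f(-q)$. Taking the quotient yields $\varphi(-q)/\psi(q)=f(-q)^{3}/f(-q^{2})^{3}$ immediately.

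There is really no serious obstacle here: the proof is a one-line manipulation of infinite products together with a bookkeeping check that the constants $2^{3/4}$ and $q^{1/8}$ on the two sides agree. The only point to be careful about is the exponent $(k-1)/24$ in the definition of $r_{k,n}$: cubing $r_{2,n}$ gives $q^{3(2-1)/24}=q^{1/8}$, which matches the $q^{k/4}=q^{1/8}$ arising from $A_{1/2,n}$; this numerical coincidence is precisely what makes the identity come out with no leftover power of $q$.
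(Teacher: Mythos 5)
Your proof is correct and follows essentially the same route as the paper: both reduce the claim to the identity $\varphi(-q)/\psi(q)=f^3(-q)/f^3(-q^2)$ after checking that the nomes and the constants $2^{3/4}q^{1/8}$ match. The only difference is that you derive that identity from the infinite products rather than citing Entry 24(iii) of Berndt's book as the paper does; incidentally, your version corrects a typo in the paper's displayed equation, which writes $\varphi(q)/\psi(q)$ where $\varphi(-q)/\psi(q)$ is meant.
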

\begin{proof}
By Entry 24 (iii) \cite[p. 39]{BB3}, we have
\begin{align}
\frac{\varphi(q)}{\psi(q)}=\frac{f^3(-q)}{f^3(-q^2)}.
\end{align}
By using the definition of $A_{k,n}$ for $k=$1/2 and $r_{k,n}$ for $k=$2, we arrive at \eqref{a1by2r2n}. 
\end{proof}


\begin{lemma}Let $n$ and $k$ be any two positive rational such that $k>1$, we have 
\begin{align}\label{a1by2r2n2}
A'_{k/2,n}=\frac{r^2_{2,4n}r_{k,4n}r_{2,k^2n}}{r^2_{2,n}}.
\end{align}
\end{lemma}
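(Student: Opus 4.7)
The plan is to parallel the proof of the preceding lemma for $A_{1/2,n}$: convert both $\varphi(q)$ and $\psi(q^k)$ entirely into products of Ramanujan's function $f(-q)=(q;q)_\infty$, then recognize the resulting quotient of $f$-values at various $q$-powers as the claimed combination of $r$-parameters. The algebraic skeleton is short; the substance of the argument is the bookkeeping of constants and $q$-powers.

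First I would start from the definition
\begin{equation*}
A'_{k/2,n} = \frac{\varphi(q)}{2(k/2)^{1/4}\, q^{k/8}\,\psi(q^k)}, \qquad q = e^{-\pi\sqrt{2n/k}},
\end{equation*}
and substitute the standard Euler-product identities
\begin{equation*}
\varphi(q)=\frac{f^{5}(-q^{2})}{f^{2}(-q)\,f^{2}(-q^{4})}, \qquad \psi(q^{k})=\frac{f^{2}(-q^{2k})}{f(-q^{k})},
\end{equation*}
the first of which follows from $(-q;q^2)_\infty = f^{2}(-q^2)/[f(-q)f(-q^4)]$ together with $\varphi(q)=(-q;q^2)_\infty^2(q^2;q^2)_\infty$, and the second from $\psi(q)=(q^2;q^2)_\infty/(q;q^2)_\infty$. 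This turns $A'_{k/2,n}$ into a single quotient of $f$-values at the arguments $q,q^{2},q^{4},q^{k},q^{2k}$.

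Next I would break this quotient into blocks that match the shape of $r_{j,m}=f(-Q)/\bigl(j^{1/4}Q^{(j-1)/24}f(-Q^{j})\bigr)$: a squared $r_{2,\cdot}$-block joining $f(-q)$ and $f(-q^2)$, another squared $r_{2,\cdot}$-block joining $f(-q^2)$ and $f(-q^4)$, an $r_{2,\cdot}$-block joining $f(-q^k)$ and $f(-q^{2k})$, and a single $r_{k,\cdot}$-block joining $f(-q^2)$ and $f(-q^{2k})$. For each block, the defining rule $Q=e^{-2\pi\sqrt{m/j}}$ of $r_{j,m}$ must be matched against the base value $q=e^{-\pi\sqrt{2n/k}}$, which determines the correct second subscript of each $r$-parameter appearing in \eqref{a1by2r2n2}.

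The main obstacle is the bookkeeping: each $r_{j,m}$ carries a constant $j^{1/4}$ and a $q$-factor $Q^{(j-1)/24}$, and these prefactors, after being collected across all four blocks, must cancel precisely against the $2(k/2)^{1/4}q^{k/8}$ in the denominator of the definition of $A'_{k/2,n}$. Thus the key verification is numerical: show that the accumulated power of $q$ reduces to exactly $k/8$ and that the accumulated numerical constant reduces to exactly $2(k/2)^{1/4}$, so that once divided out, only the stated ratio of $r$-parameters remains. Once this routine but delicate check is carried out, \eqref{a1by2r2n2} follows.
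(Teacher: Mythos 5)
Your proposal follows essentially the same route as the paper: the paper's proof consists precisely of the product identity $\frac{\varphi(q)}{\psi(q^k)}=\frac{f^2(-q^2)}{f^2(-q)}\cdot\frac{f^2(-q^2)}{f^2(-q^4)}\cdot\frac{f(-q^2)}{f(-q^{2k})}\cdot\frac{f(-q^k)}{f(-q^{2k})}$ (which is exactly your two Euler-product identities for $\varphi(q)$ and $\psi(q^k)$ combined), followed by matching each block against the definition of $r_{j,m}$ with the appropriate base $Q$. The constant and $q$-power bookkeeping that you defer is precisely the step the paper also leaves implicit, so the two arguments coincide in both structure and level of detail.
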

\begin{proof}
By using the definition of theta functions $\varphi(q)$, $\psi(q)$ and $f(-q)$ , we have
\begin{align}\label{rt12}
\frac{\varphi(q)}{\psi(q^k)}=\frac{(q^2;q^2)^{5}_{\infty}(q^k;q^k)_\infty}{(q;q)^2_\infty(q^4;q^4)^2_\infty(q^{2k};q^{2k})^2_\infty}=\frac{f^2(-q^2)}{f^2(-q)}\frac{f^2(-q^2)}{f^2(-q^4)}\frac{f(-q^2)}{f(-q^{2k})}\frac{f(-q^k)}{f(-q^{2k})}.
\end{align}
By using the definitions of $A'_{k,n}$ and $r_{k,n}$ in \eqref{rt12}, we arrive at \eqref{a1by2r2n2}. 
\end{proof}
\begin{theorem}\label{gtrms}For any positive real number $n$, we have
	\begin{enumerate}[(i)]
		\item $A'_{4,n}=A_{4,n}+\sqrt 2,$
		\item $A'_{2,n}=\sqrt{A^2_{2,n}+\sqrt 2},$
		\item $A'_{1,n}=\sqrt[4]{A^4_{1,n}+1},$
		\item $4(A'_{1/2,n})^4-(A'_{1/2,n})^{-4}=4A_{1/2,n}^4.$
	\end{enumerate}
\end{theorem}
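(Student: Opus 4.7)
The key observation is that each part of the theorem is a direct specialization of one of the four lemmas \eqref{39}, \eqref{310}, \eqref{311}, \eqref{312} derived earlier, once we recognize that in each case the quantities $P$ and $R$ appearing in those lemmas are, up to a constant factor of the form $2k^{1/4}$, precisely $A_{k,n}$ and $A'_{k,n}$ respectively, after setting $q=e^{-\pi\sqrt{n/k}}$. So my plan is to treat the four parts in parallel using this template.

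For part (i) with $k=4$, I would compare the definitions
\[
A_{4,n}=\frac{\varphi(-q)}{2\sqrt{2}\,q\,\psi(q^{8})},\qquad A'_{4,n}=\frac{\varphi(q)}{2\sqrt{2}\,q\,\psi(q^{8})}
\]
with the $P,R$ in \eqref{39}. Since $P=2\sqrt{2}A_{4,n}$ and $R=2\sqrt{2}A'_{4,n}$, the identity $R=P+4$ divided by $2\sqrt{2}$ yields $A'_{4,n}=A_{4,n}+\sqrt{2}$. For part (ii) with $k=2$, the definitions give $P=2\cdot 2^{1/4}A_{2,n}$ and $R=2\cdot 2^{1/4}A'_{2,n}$ in \eqref{310}; so $R^{2}=P^{2}+8$ becomes $4\sqrt{2}(A'_{2,n})^{2}=4\sqrt{2}(A_{2,n})^{2}+8$, which rearranges to $(A'_{2,n})^{2}=A_{2,n}^{2}+\sqrt{2}$.

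For part (iii) with $k=1$, we get $P=2A_{1,n}$ and $R=2A'_{1,n}$ in \eqref{311}, and $R^{4}=P^{4}+16$ immediately reduces to $(A'_{1,n})^{4}=A_{1,n}^{4}+1$. Finally, for part (iv) with $k=1/2$, the normalization $2k^{1/4}=2^{3/4}$ gives $P=2^{3/4}A_{1/2,n}$ and $R=2^{3/4}A'_{1/2,n}$, so $R^{4}=8(A'_{1/2,n})^{4}$ and $16/R^{4}=2(A'_{1/2,n})^{-4}$. Plugging these into \eqref{312} and dividing through by $2$ produces exactly the stated relation.

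There is no real obstacle here; the proof is bookkeeping of normalizing constants against the four already-proved lemmas of Section \ref{31}. The only step where one must be careful is part (iv), where both the $k^{1/4}=2^{-1/4}$ factor inside the definition of $A'_{1/2,n}$ and the $16/R^{4}$ term on the right of \eqref{312} must be tracked simultaneously to see that the powers match and the factor of $4$ on the left side of the stated identity is correct. I would present the four calculations as a single short proof, writing the common recipe once and then listing the four substitutions.
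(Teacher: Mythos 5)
Your proposal is correct and follows exactly the paper's own route: each part is obtained by transcribing the corresponding lemma \eqref{39}, \eqref{310}, \eqref{311}, \eqref{312} via the normalizations $P=2k^{1/4}A_{k,n}$, $R=2k^{1/4}A'_{k,n}$. In fact you supply the constant-tracking details (especially for part (iv)) that the paper only sketches.
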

\begin{proof}[Proof]
	To prove Theorem \ref{gtrms} (i), we use the definition of $A_{k,n}$ and $A_{k,n}$ with $n=4$ and \eqref{39}. Similarly Theorem \ref{gtrms} (ii) follows from \eqref{310}, Theorem \ref{gtrms} (iii) follows from \eqref{311}, Theorem \ref{gtrms} (iv) follows from \eqref{312}.
\end{proof}

\noindent By using the above Theorem \ref{gtrms}, we can establish some new explicit evaluations of $A'_{4,n}$, $A'_{2,n}$ and $A'_{1/2,n}$ for different rationals $n$.
\begin{theorem}\label{a4n}
We have
\begin{align}
A_{4,2}&=1+\sqrt{1+\sqrt 2},\label{411}\\
A_{4,3}&=\frac{(1+\sqrt{2})(1+\sqrt{3})}{\sqrt 2},\label{42}\\
A_{4,4}&=\sqrt{8+6\sqrt 2}+\sqrt{6+4\sqrt 2},\label{43}\\
A_{4,7}&=\frac{(3+\sqrt 7)(4+\sqrt{14})}{2},\label{44}\\
A_{4,8}&=\sqrt{52+36\sqrt 2+(32+24\sqrt2)a}+\sqrt{56+40\sqrt 2+(36+26\sqrt2)a},\label{45}
\end{align}
\begin{align}
A_{4,9}&=\sqrt{94+66\sqrt 2+38\sqrt 6+54\sqrt 3}+\sqrt{93+66\sqrt 2+38\sqrt 6+54\sqrt 3}\label{46},\\A_{4,12}&=\sqrt{402+232\sqrt3+284\sqrt2+164\sqrt 6}+\sqrt{416+240\sqrt3 +294\sqrt2+170\sqrt6},
\end{align}
\begin{align}
\nonumber A_{4,28}&=\sqrt{2(64641+17276\sqrt {14}+24432\sqrt7+45708\sqrt2)}\\&+2\sqrt{(32384+22899\sqrt2+12240\sqrt 7+8655\sqrt {14})},
\end{align}
where $a=\sqrt{1+\sqrt 2}.$
\end{theorem}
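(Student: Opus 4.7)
The plan is to reduce $A_{4,n}$ to a single variable in the elliptic modulus $\alpha=k^2$ at the nome $q=e^{-\pi\sqrt n/2}$, and then to split the theorem's eight cases into four \emph{seeds} (those for $n\in\{2,3,7,9\}$, plus the auxiliary $A_{4,1}=1$) evaluated directly from classical singular moduli, and four \emph{derived} values ($n\in\{4,8,12,28\}$) obtained by applying the degree-$2$ modular equation \eqref{32} to the respective seeds. Combining $\varphi(q)=\sqrt z$, the standard transcription $\varphi(-q)=\sqrt z\,(1-\alpha)^{1/4}$, and the formula for $\psi(q^8)$ from Lemma 2.2, the $\sqrt z$'s cancel and one arrives at the compact identity
\[
A_{4,n} \;=\; \frac{\sqrt 2\,(1-\alpha)^{1/4}}{1-(1-\alpha)^{1/4}}, \qquad\text{equivalently}\qquad (1-\alpha)^{1/4} \;=\; \frac{A_{4,n}}{A_{4,n}+\sqrt 2}.
\]

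For the seeds, I use that $q^2=e^{-\pi\sqrt n}$, so $\alpha(q)$ is the degree-$2$ preimage of the classical singular modulus $\alpha_n=\alpha(q^2)$: by \eqref{d2}, $\sqrt{\alpha_n}=(1-\sqrt{1-\alpha})/(1+\sqrt{1-\alpha})$. Substituting the well-known closed forms $\alpha_2=(\sqrt 2-1)^2$, $\alpha_3=(2-\sqrt 3)/4$, and the analogous expression for $\alpha_7$, solving for $\alpha=\alpha(q)$, and plugging back into the transcription produces $A_{4,1}=1$, $A_{4,2}=1+\sqrt{1+\sqrt 2}$, $A_{4,3}=(1+\sqrt 2)(1+\sqrt 3)/\sqrt 2$, and $A_{4,7}=(3+\sqrt 7)(4+\sqrt{14})/2$. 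The remaining seed $A_{4,9}$ is obtained the same way, with $\alpha_9$ computed by specialising \eqref{d3} at $\alpha=\tfrac12$ (since $\alpha(e^{-\pi})=\tfrac12$) and then applying \eqref{d2}; this is the route that injects the $\sqrt 3$ and $\sqrt 6$ terms visible in the theorem's stated expression for $A_{4,9}$.

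For the derived values, I rewrite \eqref{32} algebraically. Setting $P=2\sqrt 2\,A_{4,n}$ and $Q=2\sqrt 2\,A_{4,4n}$ (legitimate because $q^2=e^{-\pi\sqrt{4n}/2}$) and multiplying \eqref{32} through by $PQ$ yields
\[
P(P+4)(Q+2) \;=\; Q^2, \qquad\text{i.e.,}\qquad Q^2-(P^2+4P)Q-2(P^2+4P)=0.
\]
The positive root $Q=\tfrac12\bigl((P^2+4P)+\sqrt{(P^2+4P)(P^2+4P+8)}\bigr)$ is pinned down by $A_{4,\cdot}>0$ together with the small-$q$ asymptotic $A_{4,n}\sim 1/(2\sqrt 2\,q)$. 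For $n=1$ one obtains $P=2\sqrt 2$, $P^2+4P=8+8\sqrt 2$, and hence $A_{4,4}=(2+\sqrt 2)+\sqrt{8+6\sqrt 2}=\sqrt{6+4\sqrt 2}+\sqrt{8+6\sqrt 2}$. The three cases $n=2,3,7$ are structurally identical and produce $A_{4,8}$, $A_{4,12}$, $A_{4,28}$ in turn; in particular, the inner surd $a=\sqrt{1+\sqrt 2}=A_{4,2}-1$ appearing in $A_{4,8}$ is inherited directly from $A_{4,2}$ through this recursion.

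The main obstacle is purely algebraic: the discriminants $(P^2+4P)(P^2+4P+8)$ coming out of \eqref{32}, and the analogous quadratic that arises in the seed evaluation of $A_{4,9}$, produce deeply nested radicals that must be reshaped into the $\sqrt{\,\cdot\,}+\sqrt{\,\cdot\,}$ form written in the theorem. The needed denesting identities, such as $(2+\sqrt 2)^2=6+4\sqrt 2$ and $\sqrt{2(4+3\sqrt 2)}=\sqrt{8+6\sqrt 2}$, are elementary but have to be spotted case by case, and the correct sign of every square root must be tracked against positivity and the $q\to 0$ asymptotic.
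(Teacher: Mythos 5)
Your proposal is correct, and for the four ``seed'' values it takes a genuinely different route from the paper. The paper never touches singular moduli directly: it transcribes the $P$--$Q$ modular equations of Section \ref{31} into relations between $A_{4,n}$ and $A_{4,kn}$ for $k=4,9,49$ (namely \eqref{4114} and the transcriptions of \eqref{35} and \eqref{373}), specialises at $n=1/2$, $1/3$, $1/7$, and invokes the reciprocity $A_{4,n}A_{4,1/n}=1$ (imported from Saikia) to collapse each relation into a one-variable polynomial in $A_{4,2}$, $A_{4,3}$, $A_{4,7}$ which it factors and solves, with $A_{4,4}$ and $A_{4,9}$ coming from the same relations at $n=1$ using $A_{4,1}=1$. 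You instead express $A_{4,n}$ in closed form in the modulus $\alpha$ at $q=e^{-\pi\sqrt n/2}$ and feed in the classical singular moduli $\alpha_2,\alpha_3,\alpha_7,\alpha_9$ through the degree-two relation \eqref{d2}. Your transcription $\varphi(-q)=\sqrt z\,(1-\alpha)^{1/4}$ is the correct form of Entry 10(ii) (the paper's displayed \eqref{d4}--\eqref{d41} have the exponents $1/8$ and $1/4$ swapped, although the paper uses the correct versions in its own computations), and your identity $(1-\alpha)^{1/4}=A_{4,n}/(A_{4,n}+\sqrt 2)$ is precisely the paper's \eqref{133} after the substitution $P=2\sqrt 2\,A_{4,n}$. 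For the derived values $n=4,8,12,28$ the two arguments coincide: your quadratic $Q^2-(P^2+4P)Q-2(P^2+4P)=0$ is exactly \eqref{4114}, and positivity alone already selects the root (the product of the two roots is negative), so the asymptotic is not needed. The trade-off is that your route dispenses with the reciprocity $A_{4,n}A_{4,1/n}=1$ and with the heavy degree-three and degree-seven identities \eqref{35} and \eqref{373}, at the cost of importing the values $\alpha_2,\alpha_3,\alpha_7$ from outside (and deriving $\alpha_9$ from \eqref{d3}); the radical denesting you flag at the end is real work but is comparable in bulk to the factorizations the paper must carry out.
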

\begin{proof}[Proof of \eqref{411}]Transcribing \eqref{32} by using the definition of $A_{4,n}$, we have   
\begin{equation}\label{4114}
A^2_{4,4n}=4A_{4,4n} A_{4,n}+2\sqrt 2 A_{4,n}+2\sqrt 2A_{4,4n}A^2_{4,n}+2A^2_{4,n}.
\end{equation}
Set $n=1/2$ in the above equation and using the fact that $A_{4,2}A_{4,1/2}=1$, we have
\begin{equation}\label{4112}
(A^2_{4,2}-2A_{4,2}-\sqrt 2)(A^2_{4,2}+2A_{4,2}+\sqrt2)=0.
\end{equation}
Since the second factor has not real roots, recalling that $A_{4,2}>1$ and solving the equation $A^2_{4,2}-2A_{4,2}-\sqrt 2=0$, we arrive at \eqref{411}.
\end{proof}

\begin{proof}[Proof of \eqref{42}]Transcribing \eqref{35} by using the definition of $A_{4,n}$, we have   
\begin{equation}\begin{split}
& 8{A^3_{4,n}}{A^3_{4,9n}}+12{\sqrt 2}{A^3_{4,n}}{A^2_{4,9n}}+12{A^3_{4,n}}{A_{4,9n}}+12{\sqrt 2}{A^3_{4,9n}}{A^2_{4,n}}\\&+30{A^2_{4,9n}}{A^2_{4,n}}+12{\sqrt 2}{A_{4,9n}}{A^2_{4,n}}+12{A^3_{4,9n}}{A_{4,n}}+12{\sqrt 2}{A^2_{4,9n}}A_{4,n}\\&+8{A_{4,n}}{A_{4,9n}}=A^4_{4,n}+{A^4_{4,9n}}.
\end{split}\end{equation}
Set $n=1/2$ in the above equation and using the fact that $A_{4,3}A_{4,1/3}=1$, we have
\begin{equation}\label{4113}\begin{split}
&(A^2_{4,3}-2A_{4,3}-{\sqrt 2}A_{4,3}-3-2{\sqrt 2})(A^2_{4,3}+{\sqrt 2}A_{4,3}+1)^2\\&(A^2_{4,3}+2A_{4,3}-{\sqrt 2}A_{4,3}-3+2{\sqrt 2})=0,\,\, \end{split}\end{equation}
Since the second factor has no real roots, recalling that $A_{4,3}>1$ and solving $A^2_{4,3}-(2+\sqrt 2)A_{4,3}-3-2\sqrt 2=0$, we arrive at \eqref{42}.
\end{proof}
\begin{proof}[Proof of \eqref{43}] Setting $n=1$ in \eqref{4114} and using the fact that $A_{4,1}=1$, we get    
\begin{equation}
A^2_{4,4}=4A_{4,4}+2{\sqrt 2}+2{\sqrt 2}A_{4,4}+2.
\end{equation}
Solving the above equation and recalling that $A_{4,4}>1$, we arrive at \eqref{43}.
\end{proof}

\begin{proof}[Proof of \eqref{44}] Transcribing $A_7$ and $B_7$, defined in \eqref{abn} along with the definition of $A_{4,n}$, we have
\begin{equation}
A_7=8A_{4,49n}A_{4,n}\,\,\textrm{and}\,\, B_7=A_{4,49n}/A_{4,49n}, 
\end{equation}
 Set $n=1/7$ in the above equation and using the fact that $A_{4,7}A_{4,1/7}=1$, then $A_7=8$ and $B_7=1/A^2_{4,7}$. Using $A_7$ and $B_7$ in  \eqref{373} and factorizing, we arrive at 
\begin{equation}\begin{split}
&(h^2-12h-7{\sqrt 2}h+1)(h^2+12h-7{\sqrt 2}h+1)(h^2+{\sqrt 2}h+1)^2\\&\times(h^2+2h+3{\sqrt 2}h+1)^2(h^2-2h+3{\sqrt 2}h+1)^2=0.
\end{split}\end{equation}
where $h=A_{4,7}$.

Recalling that $A_{4,7}>1$ and solving $h^2-(12+7\sqrt 2)h+1=0$, we get \eqref{44}.
\end{proof}
 
We observe that \eqref{4114} results in a quadratic equation  in $A_{4,4n}$ for any known value of ${A_{4,n}}$. We use the value of $A_{4,2}$ to get $A_{4,8}$, $A_{4,3}$ to get $A_{4,12}$, $A_{4,7}$ to get $A_{4,28}$ respectively. Hence we omit the proof.  The values of $A_{4,1/n}$ where $n \in \{2,3,4,7,8,9,12, 24\}$ can be easily found out by the fact that $A_{4,n}A_{4,1/n}=1.$ 
\section{Modular function of level 16}\label{61}
For $|q|<1$, Ye \cite{Dye} developed and studied a modular function of Level 16 which is an analogue of Ramanujan's theories of elliptic functions to alternate bases: 
\begin{equation}
h(q)=q\prod\limits_{j=1}^{\infty}\frac{(1-q^{16j})^2(1-q^{2j})}{(1-q^j)^2(1-q^{8j})}.
\end{equation}
In his work, he established some basic properties involving $h$ and Ramanujan's theta function. In the following Lemma, we list one of the relation. \begin{lemma}We have
\begin{equation}\label{hq}
    h(q)=\frac{q\psi(q^8)}{\varphi(-q)}.
\end{equation}
\end{lemma}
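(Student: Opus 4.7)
The plan is to verify the identity by converting both sides into products of the form $(q^m;q^m)_\infty$ and checking that they coincide term-by-term. Since $h(q)$ is defined by an explicit infinite product, the work lies entirely on the right-hand side.

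First, I would rewrite the product expansions in (1.1) and (1.2) using the basic relation $(q;q^2)_\infty = (q;q)_\infty/(q^2;q^2)_\infty$. Applying this to $\varphi(-q) = (q;q^2)_\infty^2(q^2;q^2)_\infty$ yields
\begin{equation*}
\varphi(-q) = \frac{(q;q)_\infty^{\,2}}{(q^2;q^2)_\infty}.
\end{equation*}
Similarly, $\psi(q^8) = (q^{16};q^{16})_\infty/(q^8;q^{16})_\infty$ together with $(q^8;q^{16})_\infty = (q^8;q^8)_\infty/(q^{16};q^{16})_\infty$ gives
\begin{equation*}
\psi(q^8) = \frac{(q^{16};q^{16})_\infty^{\,2}}{(q^8;q^8)_\infty}.
\end{equation*}

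Next I would substitute these into the quotient $q\psi(q^8)/\varphi(-q)$ to obtain
\begin{equation*}
\frac{q\psi(q^8)}{\varphi(-q)} = q\cdot\frac{(q^{16};q^{16})_\infty^{\,2}\,(q^2;q^2)_\infty}{(q;q)_\infty^{\,2}\,(q^8;q^8)_\infty},
\end{equation*}
which, after writing each $(q^m;q^m)_\infty$ as $\prod_{j=1}^{\infty}(1-q^{mj})$, matches the defining product for $h(q)$ exactly. This is the entirety of the argument.

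There is no real obstacle here; the proof is a routine product manipulation. The only thing to watch is bookkeeping between the $(q;q^2)_\infty$ factors in the theta-function definitions and the $(q^m;q^m)_\infty$ factors appearing in $h(q)$, which is handled by the identity $(q^a;q^{2a})_\infty = (q^a;q^a)_\infty/(q^{2a};q^{2a})_\infty$ applied with $a=1$ and $a=8$.
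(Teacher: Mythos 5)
Your verification is correct: the product manipulations all check out, and the quotient $q\psi(q^8)/\varphi(-q)$ does reduce to exactly the defining product of $h(q)$. Note that the paper itself offers no proof of this lemma --- it simply quotes the identity from Dongxi Ye's work on the level 16 modular function --- so your argument is not so much a different route as the only route actually written down; it is a clean, self-contained derivation using only the product forms of $\varphi$ and $\psi$ from (1.1)--(1.2) and the elementary identity $(q^a;q^{2a})_\infty=(q^a;q^a)_\infty/(q^{2a};q^{2a})_\infty$. The one computation worth displaying explicitly is
\begin{equation*}
\frac{q\psi(q^8)}{\varphi(-q)}
= q\cdot\frac{(q^{16};q^{16})_\infty^{2}}{(q^{8};q^{8})_\infty}\cdot\frac{(q^{2};q^{2})_\infty}{(q;q)_\infty^{2}}
= q\prod_{j=1}^{\infty}\frac{(1-q^{16j})^{2}(1-q^{2j})}{(1-q^{j})^{2}(1-q^{8j})},
\end{equation*}
which is precisely $h(q)$; no further argument is needed.
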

He established modular relation for $h(q)$ connecting with $h(-q)$, $h(q^2)$, $h(q^4)$ and $h(q^8).$
Note that in Section \ref{31}, we have provided algebraic relations between $\dfrac{\varphi (-q)}{q\psi (q^{8})}$ and $\dfrac{\varphi (-q^n)}{q^n\psi (q^{8n})}$ for $n \in 3$ and $5$ using which we can establish relation connecting $h(q)$ with $h(q^n)$, for $n \in  3$ and $5$. We list the relation in the following Theorem, set $u:=h(q)$ and $v_n:=h(q^n).$ 

\begin{theorem}\label{hq35}We have
\begin{enumerate}
\item [i)] $ u^4+(-12{v_3}-48{v_3}^2-64{v_3}^3)u^3+(-6{v_3}-48{v_3}^3-30{v_3}^2)u^2\\ +(-6{v_3}^2-12{v_3}^3-{v_3})u+{v_3}^4=0.$
\item [ii)] $(10u^2+80u^4+70u^5+u+40u^3)v_5+(4096u^5+640u^2+70u+5120u^4+2560u^3)v_5^5+(6400u^4+785u^2+80u+3200u^3+5120u^5)v_5^4+(3200u^4+400u^2+1620u^3
+40u+2560u^5)v_5^3+(100u^2+10u+785u^4+640u^5+400u^3)v_5^2=v_5^6+u^6.$
\end{enumerate}
\end{theorem}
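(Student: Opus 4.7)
The key observation, already implicit in \eqref{hq}, is that $1/h(q) = \varphi(-q)/(q\psi(q^{8}))$ and $1/h(q^n) = \varphi(-q^n)/(q^n\psi(q^{8n}))$ are precisely the parameters $P$ and $Q$ appearing in the modular equations of Section \ref{31}. Setting $u = h(q)$ and $v_n = h(q^n)$, the auxiliary quantities $A_n = PQ$ and $B_n = P/Q$ of equations \eqref{35} and \eqref{372} become
\[ A_n = 1/(uv_n), \qquad B_n = v_n/u. \]

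For part (i), I would substitute these identities directly into the modular equation \eqref{35}. Each half-integer block pairs off into a rational expression in $u,v_3$; for instance,
\[ (\sqrt{A_3}+8/\sqrt{A_3})(\sqrt{B_3}+1/\sqrt{B_3}) = \bigl(1/\sqrt{uv_3}+8\sqrt{uv_3}\bigr)\cdot(u+v_3)/\sqrt{uv_3} = (u+v_3)\bigl(1/(uv_3)+8\bigr), \]
while $A_3+64/A_3 = 1/(uv_3)+64uv_3$, $B_3+1/B_3 = (u^2+v_3^2)/(uv_3)$, and $B_3^2+1/B_3^2 = (u^4+v_3^4)/(u^2v_3^2)$. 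Multiplying \eqref{35} through by $u^2 v_3^2$ clears all denominators, and collecting by powers of $u$ reproduces the quartic in (i).

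Part (ii) proceeds in exactly the same way applied to \eqref{372} with $A_5 = 1/(uv_5)$ and $B_5 = v_5/u$. Every half-integer group in \eqref{372}---for instance $(\sqrt{A_5^3}+8^3/\sqrt{A_5^3})(\sqrt{B_5}+1/\sqrt{B_5})$---pairs off into a Laurent polynomial in $u,v_5$, since the square-root factors always occur in the combinations $\sqrt{A_5^k}$ multiplied by $\sqrt{B_5^j}$ and hence reduce to rational functions of $u$ and $v_5$ alone. The heaviest denominator is $u^3 v_5^3$, contributed by $B_5^3+1/B_5^3 = (u^6+v_5^6)/(u^3 v_5^3)$, so multiplication by $(uv_5)^3$ suffices to clear everything, yielding the stated degree-six relation.

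The main obstacle is simply the volume of algebra in part (ii): there are roughly a dozen blocks in \eqref{372} to expand, pair off, and sum, and a careless sign or coefficient can easily corrupt the final polynomial. I would organize the bookkeeping by grouping contributions by total monomial degree in $(u,v_5)$ and exploit the $u \leftrightarrow v_5$ symmetry of the resulting identity to halve the verification effort. No conceptually new ingredients beyond those already present in Section \ref{31} are required.
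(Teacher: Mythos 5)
Your proposal is correct and follows exactly the route the paper takes: the paper's proof is the one-line observation that the theorem "follows from the definition of $h(q)$ and equations \eqref{35} and \eqref{372}," i.e. precisely your substitution $A_n=1/(uv_n)$, $B_n=v_n/u$ followed by clearing denominators. Your write-up merely makes explicit the bookkeeping the paper leaves to the reader, and the sample computation for part (i) checks out against the stated quartic.
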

\begin{proof}
Theorem \ref{hq35} follows from the definition of $h(q)$ and equations  \eqref{35} and \eqref{372}.
\end{proof}
\begin{theorem}
For any positive real number $n$, we have
\begin{equation}\label{exphq}
h(e^{-\pi\sqrt{\frac{n}{8}}})=\frac{1}{\sqrt{8}A_{4,n}}.
\end{equation}
\end{theorem}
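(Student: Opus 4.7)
The plan is a one-step verification that directly matches up the defining expressions of $A_{4,n}$ and $h(q)$, with no substantive modular transformation or new identity required. Lemma \eqref{hq} supplies the key representation $h(q) = q\psi(q^8)/\varphi(-q)$, and the definition of $A_{k,n}$ with $k=4$ yields the reciprocal of this theta quotient up to the constant $\sqrt{8}$.

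Concretely, I would first specialize $k=4$ in the definition
\[
A_{k,n} = \frac{\varphi(-q)}{2k^{1/4}\, q^{k/4}\, \psi(q^{2k})}, \qquad q = e^{-\pi\sqrt{n/k}}.
\]
Since $2\cdot 4^{1/4} = 2\sqrt{2} = \sqrt{8}$, while $q^{k/4} = q$ and $\psi(q^{2k}) = \psi(q^8)$, this collapses to
\[
A_{4,n} = \frac{\varphi(-q)}{\sqrt{8}\, q\, \psi(q^8)}.
\]
Combining this with $h(q) = q\psi(q^8)/\varphi(-q)$ from Lemma \eqref{hq} gives the reciprocal relation
\[
\sqrt{8}\, A_{4,n}\cdot h(q) \;=\; \frac{\varphi(-q)}{\sqrt{8}\, q\,\psi(q^8)}\cdot \sqrt{8}\cdot \frac{q\psi(q^8)}{\varphi(-q)} \;=\; 1.
\]

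The only remaining task is bookkeeping: tracking the nome $q$ coming from $A_{4,n}$ so that its value matches the argument $e^{-\pi\sqrt{n/8}}$ appearing inside $h$, after which the stated formula $h(e^{-\pi\sqrt{n/8}}) = 1/(\sqrt{8}\, A_{4,n})$ follows. There is no genuine obstacle in this argument — the content is entirely carried by Lemma \eqref{hq} together with the two elementary simplifications $2\cdot 4^{1/4} = \sqrt{8}$ and $q^{k/4}\psi(q^{2k}) = q\,\psi(q^8)$ at $k=4$; the theorem is essentially a reformulation of the definition of $A_{4,n}$ in the language of Ye's modular function $h$.
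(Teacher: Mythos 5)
Your route is the same as the paper's own proof, which is likewise a one‑line unwinding of the two definitions, and your handling of the constant is correct: with $k=4$ one has $2k^{1/4}=2\sqrt2=\sqrt8$ and $q^{k/4}\psi(q^{2k})=q\,\psi(q^8)$, so \eqref{hq} gives $\sqrt8\,A_{4,n}\,h(q)=1$ at the nome $q$ attached to $A_{4,n}$. The problem lies in the one step you explicitly postponed --- ``tracking the nome $q$ coming from $A_{4,n}$ so that its value matches the argument $e^{-\pi\sqrt{n/8}}$'' --- because it does not match. The definition \eqref{ak1} at $k=4$ forces $q=e^{-\pi\sqrt{n/4}}=e^{-\pi\sqrt n/2}$, so what your computation actually establishes is
\[
h\bigl(e^{-\pi\sqrt{n/4}}\bigr)=\frac{1}{\sqrt8\,A_{4,n}},
\]
equivalently $h(e^{-\pi\sqrt{n/8}})=1/(\sqrt8\,A_{4,n/2})$, which is a different assertion from \eqref{exphq}.

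The discrepancy is substantive, not notational. Taking $n=1$ and $A_{4,1}=1$, the corrected identity gives $h(e^{-\pi/2})=1/\sqrt8\approx0.3536$, which agrees with a direct numerical evaluation of $q\psi(q^8)/\varphi(-q)$ at $q=e^{-\pi/2}$; by contrast \eqref{q2} asserts $h(e^{-\pi/2})=(\sqrt{1+\sqrt2}-1)/4\approx0.1384$, and that value in fact belongs to $h(e^{-\pi/\sqrt2})$. So the exponent in the statement should read $\sqrt{n/4}$ (and the error propagates through the arguments listed in \eqref{q1}--\eqref{q8}). A proof that asserts the nomes match without performing the substitution misses exactly the point at which the printed statement fails; to be complete you must carry out that bookkeeping explicitly, which will lead you either to correct the exponent to $\sqrt{n/4}$ or to re‑index the right‑hand side as $A_{4,n/2}$.
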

\begin{proof}
By using the definition of $h(q)$ and $A_{4,n}$, we arrive at \eqref{exphq}.
\end{proof}
 \begin{lemma}We have
 \begin{align}
 & h(e^{-\frac{\pi}{2\sqrt 2}})=2^{-3/2},\label{q1}\\
 &h(e^{-\frac{\pi}{ 2}})=2^{-2}\left(\sqrt{1+\sqrt{2}}-1\right),\label{q2}\\
 & h(e^{-\frac{\pi}{ 4}})=2^{-3/2}\left(\sqrt{1+\sqrt{2}}+1\right),\label{q3}
\\&h(e^{-\pi\sqrt{\frac{3}{8}}})=2^{-2}(\sqrt 3-1)(\sqrt 2-1),\\&
h(e^{-\frac{\pi}{\sqrt {24}}})=2^{-2}(\sqrt 3+1)(\sqrt 2+1),\\
&
h(e^{-\frac{\pi}{\sqrt 2}})=\frac{\sqrt
{8+6\sqrt 2)}-\sqrt{6+4\sqrt 2}}{4\sqrt 2(\sqrt 2+1)},\\&
h(e^{-\frac{\pi}{4\sqrt 2}})=\frac{\sqrt
{8+6\sqrt 2)}+\sqrt{6+4\sqrt 2}}{\sqrt 8}.\label{q8}
\end{align}
 \end{lemma}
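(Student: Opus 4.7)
The plan is to apply the bridge identity $h(e^{-\pi\sqrt{n/8}})=1/(\sqrt{8}\,A_{4,n})$ from \eqref{exphq}, specialise the parameter $n$ to each of the arguments appearing in \eqref{q1}--\eqref{q8}, and then substitute the corresponding explicit values of $A_{4,n}$ drawn from Theorem~\ref{a4n}, together with the reciprocal relation $A_{4,n}A_{4,1/n}=1$ where necessary.

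First I would read off $n$ by matching the exponent $\sqrt{n/8}$ against each of $1/(2\sqrt 2)$, $1/2$, $1/4$, $\sqrt{3/8}$, $1/\sqrt{24}$, $1/\sqrt 2$, $1/(4\sqrt 2)$, obtaining $n=1,2,1/2,3,1/3,4,1/4$ respectively. For $n=1$ one uses $A_{4,1}=1$ (as used in the proof of \eqref{43}) to produce \eqref{q1} at once. For $n=2,3,4$ the values of $A_{4,n}$ are given directly by Theorem~\ref{a4n}, yielding \eqref{q2}, the first half of \eqref{q3} (after reciprocation), \eqref{q8}, and so on. The reciprocal cases $n=1/2,1/3,1/4$ are handled by $A_{4,n}A_{4,1/n}=1$, which turns $1/(\sqrt 8 A_{4,1/n})$ into $A_{4,n}/\sqrt 8$ and produces \eqref{q3}, \eqref{q8}, and the value $h(e^{-\pi/\sqrt{24}})$ immediately.

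What is left is purely algebraic simplification of $1/(\sqrt 8\,A_{4,n})$ into the stated surd form. For \eqref{q2} this is the rationalisation $1/(1+\sqrt{1+\sqrt 2})=(\sqrt{1+\sqrt 2}-1)/\sqrt 2$; for $h(e^{-\pi\sqrt{3/8}})$ one multiplies by the conjugates $(\sqrt 2-1)(\sqrt 3-1)$; for \eqref{q1} (the $n=4$ case) one multiplies numerator and denominator by $\sqrt{8+6\sqrt 2}-\sqrt{6+4\sqrt 2}$ and uses
\[
(\sqrt{8+6\sqrt 2})^{2}-(\sqrt{6+4\sqrt 2})^{2}=2+2\sqrt 2=2(1+\sqrt 2),
\]
which produces the denominator $4\sqrt 2(1+\sqrt 2)$ as displayed.

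The proof is therefore essentially a transcription: no further modular equations are needed beyond those already invoked in Theorem~\ref{a4n}. The only real obstacle is bookkeeping — making sure that the rationalised forms agree verbatim with the RHS of \eqref{q1}--\eqref{q8} — but each case reduces to a one-line conjugate multiplication, so no step requires genuinely new input.
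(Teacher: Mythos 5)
Your proposal is correct and is exactly the paper's own argument: specialise $h(e^{-\pi\sqrt{n/8}})=1/(\sqrt{8}\,A_{4,n})$ at $n=1,2,1/2,3,1/3,4,1/4$, use the values of $A_{4,n}$ from Theorem~\ref{a4n} together with $A_{4,n}A_{4,1/n}=1$, and rationalise. The only blemish is a bookkeeping slip in your write-up (you twice refer to the $n=4$ case as \eqref{q1}/\eqref{q8} when it is actually the unlabeled $h(e^{-\pi/\sqrt 2})$ evaluation), but the mathematics is unaffected.
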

The evaluations listed in above Lemma follows easily by using the values of $A_{4,n}$ for $n=$ 1, 2, 1/2, 3, 1/3, 4 and 1/4 respectively in equation \eqref{exphq}. 
\section{Explicit evaluation of $h'_{2,n}$ }\label{s51}
In this section, we list applications of modular equations found in Section \ref{31} to derive few relations connecting the parameters $A_{k,n}$ with $h'_{2,n}$. We begin this section by listing a few explicit evaluations of $h'_{2,n}$ for different real number $n.$
\begin{theorem}\label{hn/8}
We have
\begin{align}
h'_{2,1/8}&=2^{-1/4}\sqrt{\sqrt 2-1},\label{h2by8}\\
h'_{2,3/8}&=2^{-1/4}\sqrt{(\sqrt2+1)(\sqrt3+\sqrt2)}\label{h2by38},\\
h'_{2,7/8}&=2^{-1/4}(\sqrt 2-1)\sqrt{2\sqrt 2+\sqrt 7},\label{h2by78}\\
h'_{2,9/8}&=2^{-1}{\left[(\sqrt3-\sqrt2)^2(4+(12-8\sqrt2)a)+12\sqrt3-14\sqrt2\right]^{1/2}},\label{h2by98}\\
h'_{2,1/16}&=2^{-1/2}\left({\sqrt{\sqrt{2}+1}-1}\right),\label{h2by16}\\
h'_{2,1/24}&=2^{-1/4}\sqrt{(\sqrt2-1)(\sqrt3-\sqrt2)},\label{h2by24}\\
h'_{2,1/32}&=2^{-1}{\sqrt{(8+6\sqrt2)-4\sqrt{8+6\sqrt2}}},\label{h2by32}\\
h'_{2,1/56}&=2^{-1/4}(\sqrt 2-1)\sqrt{2\sqrt 2-\sqrt 7},\label{h2by56}
\end{align}
where $a:=\sqrt{93+66\sqrt 2+38\sqrt 6+54\sqrt 3}.$
\end{theorem}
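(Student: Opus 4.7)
The plan is to combine the modular equation \eqref{313} from Section \ref{31} with the explicit values of $A_{4,n}$ catalogued in Theorem \ref{a4n}. Equation \eqref{313} reads $(Q+4)P^2 = Q$, where $P = \varphi(-q)/\varphi(-q^2)$ and $Q = \varphi(-q)/(q\psi(q^8))$, and it is precisely the identity that ties together the building block of $h'_{2,n}$ (whose defining quotient is $\varphi(-q)/\varphi(-q^2)$) to that of $A_{4,n}$ (whose defining quotient involves $\varphi(-q)/(q\psi(q^8))$).

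Concretely, I will set $q = e^{-\pi\sqrt{2n}}$. The definition \eqref{hkkn} then gives $P = 2^{1/4}\,h'_{2,n}$, while the same $q$ coincides with the specialization $q = e^{-\pi\sqrt{(8n)/4}}$ that appears in $A_{4,8n}$, so from \eqref{ak1} we read off $Q = 2\sqrt{2}\,A_{4,8n}$. Substituting into $(Q+4)P^2 = Q$ and solving yields the master formula
$$
(h'_{2,n})^2 \;=\; \frac{A_{4,8n}}{\sqrt{2}\,A_{4,8n} + 2}.
$$
Each of the eight required evaluations then reduces to plugging in the corresponding value from Theorem \ref{a4n}: directly for $n \in \{1/8,\,3/8,\,7/8,\,9/8\}$ using $A_{4,1}=1,\ A_{4,3},\ A_{4,7},\ A_{4,9}$, and via the reciprocation $A_{4,m}A_{4,1/m}=1$ for $n \in \{1/16,\,1/24,\,1/32,\,1/56\}$, which brings in $1/A_{4,2},\ 1/A_{4,3},\ 1/A_{4,4},\ 1/A_{4,7}$ respectively.

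The remaining work is pure algebraic simplification. Easy cases such as $n = 1/8$ collapse immediately once $\sqrt{2}+2$ is rationalized. For $n = 3/8$, the denominator $(1+\sqrt 2)(1+\sqrt 3)+2$ factors as $(1+\sqrt 3)(\sqrt 2+\sqrt 3)$, which produces a clean square-root form after one rationalization; analogous conjugate factorizations handle the other mid-complexity entries such as $n = 7/8$ and the reciprocal cases $n = 1/16, 1/24, 1/56$. The genuinely lengthy case is $n = 9/8$: since $A_{4,9}$ is itself a sum of two nested square roots in $94+66\sqrt 2+38\sqrt 6+54\sqrt 3$ and $93+66\sqrt 2+38\sqrt 6+54\sqrt 3$, extracting the stated closed form involving $a = \sqrt{93+66\sqrt 2+38\sqrt 6+54\sqrt 3}$ requires careful denesting; this denesting step is the main obstacle, but it is a mechanical (if tedious) surd manipulation and introduces no new theta-function input beyond \eqref{313} and the values of Theorem \ref{a4n}.
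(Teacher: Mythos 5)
Your proposal takes essentially the same route as the paper: the paper likewise transcribes \eqref{313} via $P=2^{1/4}h'_{2,n/8}$, $Q=2\sqrt2\,A_{4,n}$ into a master relation equivalent to your $\bigl(h'_{2,n}\bigr)^2=A_{4,8n}/(\sqrt2\,A_{4,8n}+2)$, and then substitutes the values of $A_{4,n}$ from Theorem \ref{a4n} together with $A_{4,m}A_{4,1/m}=1$ for the reciprocal cases. Your master formula is in fact stated more accurately than the paper's displayed equation \eqref{ah4}, which omits a factor of $h^2$ in its first term.
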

\begin{proof}
When we transcribe \eqref{313} by using the definition of $h'_{2,n}$ and $A_{4,n}$, we get
\begin{align}\label{ah4}
    \sqrt2A_{4,n}+2(h'_{2,n/8})^2=A_{4,n},
\end{align}
Note that the above equation is a general formula to explicitly evaluate $h'_{2,n/8}$ for any positive rationals $n$. We prove a few values listed in above Theorem \ref{hn/8}. For a proof of \eqref{h2by8}, set  $n=1$ in \eqref{ah4} and using the fact that $A_{4,1}=1$, we find that
\begin{equation}
2(h'_{2,1/8})^2+\sqrt 2=2.
\end{equation}
Solving the above equation for $h'_{2,1/8}$ and recalling that $h'_{2,1/8}>1$, we get \eqref{h2by8}.

\noindent For proving \eqref{h2by38}--\eqref{h2by56}, we repeat the same argument as in the proof of \eqref{h2by8}.
\end{proof}
\begin{theorem}
We have
\begin{align}
h'_{2,1/4}&=2^{-3/4}\sqrt[4]{\sqrt 2-1},\label{h1by4}\\
h'_{2,3/4}&=2^{-1}{\sqrt[4]{(16-8\sqrt3)(\sqrt3+\sqrt2)}}\label{h2by34},\end{align}\begin{align}
h'_{2,1/12}&=2^{-1}{\sqrt[4]{(16-8\sqrt3)(\sqrt3-\sqrt2)}}\label{h2by12},
\\
h'_{2,9/4}&=2^{-1}{\sqrt[4]{(\sqrt2-1)^3(56+32\sqrt3)}}\label{h2by19},\\
h'_{2,1/36}&=2^{-1}{\sqrt[4]{(\sqrt2-1)^3(56-32\sqrt3)}}\label{h2by36}.
\end{align}
\end{theorem}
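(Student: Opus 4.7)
The theorem is the direct companion of Theorem \ref{hn/8}: whereas Theorem \ref{hn/8} treated $h'_{2,m/8}$, here one treats $h'_{2,m/4}$. The strategy is to reuse the transcription of \eqref{313}, but evaluated at the shifted variable $q=e^{-\pi\sqrt{n/2}}$ rather than $q=e^{-\pi\sqrt{n/4}}$. At this $q$, the definitions \eqref{hkkn} and \eqref{ak1} give $P=\varphi(-q)/\varphi(-q^{2})=2^{1/4}h'_{2,n/4}$ and $Q=\varphi(-q)/(q\psi(q^{8}))=2\sqrt{2}\,A_{4,2n}$ (note that $e^{-\pi\sqrt{(2n)/4}}=e^{-\pi\sqrt{n/2}}$). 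Substituting into $(Q+4)P^{2}=Q$ and simplifying produces the working identity (Theorem \ref{hn/8}'s relation \eqref{ah4} with $n$ replaced by $2n$)
\begin{equation*}
(\sqrt{2}\,A_{4,2n}+2)(h'_{2,n/4})^{2}=A_{4,2n}.
\end{equation*}
Thus each $(h'_{2,n/4})^{2}$ is a rational function of $A_{4,2n}$, and the proof reduces to supplying the values $A_{4,2},\,A_{4,6},\,A_{4,2/3},\,A_{4,18},\,A_{4,2/9}$ for $n=1,3,1/3,9,1/9$ respectively.

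The value $A_{4,2}$ is given by \eqref{411} and settles \eqref{h1by4}. For $A_{4,18}$ I would apply the degree-$3$ modular relation \eqref{35}: under the identifications $A_{3}=8A_{4,2}A_{4,18}$ and $B_{3}=A_{4,2}/A_{4,18}$ from \eqref{abn}, substituting the known $A_{4,2}$ turns \eqref{35} into a polynomial in the single unknown $A_{4,18}$, and the correct root is selected by the size condition $A_{4,18}>1$. The reciprocity $A_{4,m}A_{4,1/m}=1$ applied to $A_{4,9/2}$ (obtained symmetrically from \eqref{35} with $m=1/2$) then delivers $A_{4,2/9}$, feeding \eqref{h2by19} and \eqref{h2by36}. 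For the remaining pair $A_{4,6},A_{4,2/3}$, feeding \eqref{h2by34} and \eqref{h2by12}, I would combine \eqref{35} at $m=2/3$ (one relation between the two unknowns) with the degree-$2$ relation \eqref{32} at $n=3/2$ (a second relation, using $A_{4,3/2}=1/A_{4,2/3}$ by reciprocity); the resulting $2\times 2$ polynomial system is solvable for both values simultaneously. Within each of the reciprocal pairs $(3/4,1/12)$ and $(9/4,1/36)$ only one member actually has to be computed, since the other follows by $n\mapsto 1/n$.

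Once each $A_{4,2n}$ is in hand, substitution into the working identity, extraction of the positive square root, and simplification of the resulting nested surds recovers the claimed closed forms. The principal obstacle is the $A_{4,6}$--$A_{4,2/3}$ step: solving the simultaneous polynomial system arising from \eqref{35} and \eqref{32}, and then compressing the answer into the compact shape $\tfrac{1}{2}\sqrt[4]{(16-8\sqrt{3})(\sqrt{3}\pm\sqrt{2})}$ displayed in \eqref{h2by34} and \eqref{h2by12}.
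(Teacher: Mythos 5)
Your proposal is correct in outline but follows a genuinely different route from the paper's. The paper does not go through \eqref{313} and $A_{4,2n}$ at all for this theorem: it transcribes \eqref{315} (the $\psi(q^4)$ identity) at $q=e^{-\pi\sqrt{n/2}}$ to get the quartic relation $2A^2h^4+2\sqrt2\,h^4=A^2$ with $h=h'_{2,n/4}$ and $A=A_{2,n}$, and then simply substitutes $A_{2,1}=1$ together with the values of $A_{2,3}$, $A_{2,1/3}$, $A_{2,9}$, $A_{2,1/9}$ imported ready-made from Saikia \cite{NS1}. Your quadratic working identity $(\sqrt2\,A_{4,2n}+2)(h'_{2,n/4})^2=A_{4,2n}$ is correctly derived from \eqref{313}, and for $n=1$ it yields $h^4=(\sqrt2-1)/2$, in agreement with the paper's own intermediate step \eqref{ah21} (both computations in fact give $2^{-1/4}\sqrt[4]{\sqrt2-1}$ rather than the $2^{-3/4}$ printed in \eqref{h1by4}, which appears to be a typo in the statement). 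The cost of your route is the supply of auxiliary values: none of $A_{4,6}$, $A_{4,2/3}$, $A_{4,18}$, $A_{4,2/9}$ appears anywhere in the paper, so you must manufacture all of them. The $A_{4,18}$ computation from \eqref{35} with known $A_{4,2}$ is routine in this literature, but the coupled polynomial system you propose for $A_{4,6}$ and $A_{4,2/3}$ (combining \eqref{35} at one argument with \eqref{32} at another, plus reciprocity) is a substantial, delicate piece of algebra with a nontrivial root-selection step, and you leave it unexecuted. The paper's choice of \eqref{315} and the parameter $A_{2,n}$ exists precisely to sidestep this: Saikia's tables already contain every auxiliary value needed, so each evaluation reduces to solving a single explicit quartic in $h$.
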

\begin{proof}
When we transcribe \eqref{315} by using the definition of $h'_{2,n}$ and $A_{2,n}$, we get 
\begin{align}\label{ah2}
    2A^2_{2,n}(h'_{2,n/4})^4+2\sqrt 2(h'_{2,n/4})^4=A^2_{2,n},
\end{align}
 Note that the above equation is a general formula to explicitly evaluate $h'_{2,n/4}$ for any positive rationals $n$. For brevity we prove \eqref{h1by4} and \eqref{h2by34}. Put $n=1$ in \eqref{ah2} and using the fact that $A_{2,1}=1$, we have
\begin{align}\label{ah21}
    \sqrt 2=2(h'_{2,1/4})^4+1,
\end{align}
On solving the above equation and recalling that $h'_{2,1/4}>1,$ we arrive at \eqref{h1by4}.

For proof of \eqref{h2by34}, we let $n=3$ and using the value of $A_{2,3}=\sqrt{(\sqrt3 +\sqrt2)(\sqrt2+1)}$ found by Saikia \cite{NS1} in \eqref{ah2}, we get
\begin{align}\label{ah22}
    2(h'_{2,3/4})^4+\sqrt 6-2\sqrt 3-2\sqrt 2+3=0.
\end{align}
On solving \eqref{ah22} and recalling that $h'_{2,3/4}>1$, we arrive at \eqref{h2by34}.
\end{proof}
For \eqref{h2by12}-\eqref{h2by36}, we repeat the same argument as in the proof of \eqref{h2by34} except we use the values of $A_{2,1/3}$, $A_{2,9}$ and $A_{2,1/9}$ respectively obtained by Saikia \cite{NS1}.

\begin{theorem}If $h:=h'_{2,n/2}$\,\,\,and\,\,\,$A:=A_{1,n}$\,\,\,then
\begin{align}\label{ah1by2}
    4(A^4+1)h^8=A^4.
\end{align}
\end{theorem}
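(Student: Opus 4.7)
The plan is to recognize this as a direct transcription of the theta-function identity \eqref{317} already established in Section \ref{31}. Recall that identity states: if $P := \varphi(-q)/\varphi(-q^2)$ and $Q := \varphi(-q)/(q^{1/4}\psi(q^2))$, then $(Q^4+16)P^8 = Q^4$. The strategy is simply to specialize $q$ and match $P,Q$ against the parameters $h'_{2,n/2}$ and $A_{1,n}$.

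First I would fix $q = e^{-\pi\sqrt{n}}$. Then I would check the two substitutions. From the definition \eqref{hkkn} of $h'_{k,n}$, with $k=2$ and $n$ replaced by $n/2$, one reads off $q = e^{-2\pi\sqrt{(n/2)/2}} = e^{-\pi\sqrt{n}}$, which agrees with our choice, and
\[
h = h'_{2,n/2} = \frac{\varphi(-q)}{2^{1/4}\varphi(-q^2)}, \qquad\text{so}\qquad P = 2^{1/4} h.
\]
From the definition \eqref{ak1} of $A_{k,n}$ with $k = 1$, one reads off $q = e^{-\pi\sqrt{n}}$ again, and
\[
A = A_{1,n} = \frac{\varphi(-q)}{2 q^{1/4}\psi(q^2)}, \qquad\text{so}\qquad Q = 2A.
\]

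Substituting $P = 2^{1/4}h$ and $Q = 2A$ into \eqref{317} gives $(16A^4+16)\cdot 4 h^8 = 16 A^4$, which after dividing by $16$ yields $4(A^4+1)h^8 = A^4$, as claimed. There is no real obstacle here: the whole content is packaged in Lemma \eqref{317} (which in turn rests on \eqref{d4}, \eqref{d51}, and the modular-equation bookkeeping done earlier), so the only work is the careful verification that the exponents of $q$ in the two definitions line up at the single point $q = e^{-\pi\sqrt{n}}$ and that the normalizing constants $2^{1/4}$ and $2$ absorb correctly. Once those are checked, the identity falls out in one line.
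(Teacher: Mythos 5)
Your proposal is correct and follows exactly the paper's own route: the paper's proof is the one-line remark that the identity is obtained by transcribing \eqref{317} via the definitions of $h'_{2,n}$ and $A_{1,n}$, which is precisely the substitution $P=2^{1/4}h$, $Q=2A$ at $q=e^{-\pi\sqrt{n}}$ that you carry out. Your version just makes the normalization and exponent bookkeeping explicit, and the arithmetic checks out.
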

\begin{proof}
Transcribing \eqref{317} by using the definition of $h'_{2,n}$ and $A_{1,n}$, we arrive at \eqref{ah1by2}. 
\end{proof}

\begin{theorem}
\begin{eqnarray}
&&h'_{2,1}=2^{7/8}(\sqrt2-1)^{1/8},\label{h21}\\
&&h'_{2,3}= 2^{-1/8} (-17-12\sqrt2+10\sqrt3+7\sqrt6)^{1/8}\label{h23},
\\ 
&&h'_{2,5}=2^{-1/8}(-161-72\sqrt5+51\sqrt{10} +114\sqrt2)^{1/8},\label{h25}\\&&
h'_{2,6}=(85\sqrt6+147\sqrt2-208-120\sqrt3)^{1/8},\label{h26}\\&&
h'_{2,9}=2^{-1/8}(2772\sqrt3-4801+3395\sqrt2-1960\sqrt6)^{1/8}\label{h29}.
\end{eqnarray}
\end{theorem}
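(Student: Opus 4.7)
The plan is to exploit Theorem \ref{ah1by2}, which rearranges to the master relation
\begin{equation*}
h^8 = \frac{A^4}{4(A^4+1)},
\end{equation*}
where $h := h'_{2,n/2}$ and $A := A_{1,n}$ are evaluated at the common nome $q = e^{-\pi\sqrt{n}}$. For each target index $m \in \{1,3,5,6,9\}$, taking $n = 2m$ reduces the evaluation of $h'_{2,m}$ to knowledge of a single algebraic quantity, namely $A_{1,2m}$. Thus the five inputs to be supplied are $A_{1,2},\,A_{1,6},\,A_{1,10},\,A_{1,12},\,A_{1,18}$.

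These five evaluations are available from Saikia \cite{NS1}, or equivalently can be extracted from the classical singular moduli $\alpha(2m)$ together with the transformation formulas of Section \ref{21}, which express $A_{1,n}$ as an explicit algebraic function of $\alpha$. Once $A_{1,2m}^4$ is in hand, the procedure for each $m$ is uniform: substitute into the master relation to obtain $h^8 = A^4/(4A^4+4)$, rationalise the denominator by multiplying numerator and denominator by an appropriate conjugate surd, and extract the positive eighth root. The branch is selected unambiguously by the positivity of $h'_{2,m}$ together with its easily verified order of magnitude.

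The principal obstacle is algebraic rather than conceptual: for $m = 5, 6, 9$ the singular moduli $\alpha(10)$, $\alpha(12)$, $\alpha(18)$ involve nested surds in $\sqrt{2}, \sqrt{3}, \sqrt{5}, \sqrt{6}$, and reducing $A^4/(4(A^4+1))$ to the compact nested radical forms shown in \eqref{h25}--\eqref{h29} requires careful bookkeeping during rationalisation. No new identities are needed beyond the master relation and routine manipulation of quadratic surds, so the argument parallels the earlier proofs in this section, in particular the derivation of \eqref{h2by34} from \eqref{ah2} via the value of $A_{2,3}$ found by Saikia.
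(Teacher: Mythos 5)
Your plan is structurally parallel to the paper's (a single ``master relation'' from Section \ref{31}, plus known special values of a parameter $A$, then solve for $h$ and pick the positive root), but you have chosen a different master relation, and that choice creates a real gap. You work from \eqref{ah1by2}, $4(A^4+1)h^8=A^4$ with $h=h'_{2,n/2}$ and $A=A_{1,n}$, so producing $h'_{2,m}$ for $m\in\{1,3,5,6,9\}$ requires the five values $A_{1,2}$, $A_{1,6}$, $A_{1,10}$, $A_{1,12}$, $A_{1,18}$. None of these is established anywhere in the paper, and you only assert that they are ``available from Saikia \cite{NS1}'' or extractable from singular moduli $\alpha(2m)$ --- an unverified external dependency that, in the singular-moduli variant, is itself a nontrivial computation rather than routine bookkeeping. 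As written, the proof is not self-contained.

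The paper instead transcribes \eqref{318} into \eqref{ah211}, $(4h^8-1)A^8+4h^{16}=0$ with $h=h'_{2,n}$ and $A=A_{1/2,n}$ at the \emph{same} index $n$. The needed inputs are then $A_{1/2,1},A_{1/2,3},A_{1/2,5},A_{1/2,6},A_{1/2,9}$, all of which are on hand: Lemma \eqref{a1by2r2n} gives $A_{1/2,n}=r^3_{2,n}$, and the relevant $r_{2,n}$ are listed in Lemma \ref{r2n}. (Note also that \eqref{ah211} is quadratic in $h^8$ rather than linear, so the final extraction differs slightly from your $h^8=A^4/(4A^4+4)$.) Your route would become complete either by switching to \eqref{ah211} with the $A_{1/2,n}$ values already proved in Section \ref{41}, or by first supplying derivations of $A_{1,2},A_{1,6},A_{1,10},A_{1,12},A_{1,18}$; without one of these repairs the argument does not close.
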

\begin{proof}
When we transcribe \eqref{318} by using the definition of $h'_{2,n}$ and $A_{1/2,n}$, we get 
\begin{align}\label{ah211}
   (4h^8-1)A^8+4h^{16}=0,
\end{align}
where $h:=h'_{2,n}$ and $A:=A_{1/2,n}$ Note that the above equation is general formula to explicitly evaluate $h'_{2,n}$ for any positive rationals $n$. For brevity we prove \eqref{h21}. Put $n=1$ in \eqref{ah211} and using the fact that $A_{1/2,1}=1$, we have
\begin{align}
(h'_{2,1})^8+(h'_{2,1})^{16}=2^{-2}.
\end{align}
On solving the above equation and recalling that  $h'_{2,1}>0,$ we arrive at \eqref{h21}.

For proving \eqref{h23}-\eqref{h29}, we use \eqref{ah211} along with the corresponding value of $A_{1/2,n}$ and repeat the same argument as in the proof of \eqref{h2by34}, to complete
the proof.
\end{proof}


\section{Concluding Remarks}
In this article, we have explicitly evaluated the parameter $h'_{2,n}$ for few positive rational $n$. We have explored a few properties of the parameters $A_{k,n}$ and $A'_{k,n}$ and established a few relations connecting each other. 
\section{Acknowledgments}
The authors are grateful to the anonymous referee for his/her valuable suggestions.


\begin{thebibliography}{}
\bibitem{CA3}
C. Adiga, Taekyun Kim,  M. S. Mahadeva Naika and H. S. Madhusudhan, On Ramanujan's cubic continued fraction and explicit evaluations of theta-functions, Indian J. pure appl. math., 35(9) (2004), 1047--1062.
\bibitem{ND2}
N. D. Baruah and Nipen Saikia,  Two parameters for Ramanujan's theta--functions and their explicit values, Rocky Mountain J. Math., 37(6) (2007), 1747--1790.


\bibitem{BB3}B. C. Berndt, Ramanujan's Notebooks, Part III, Springer--Verlag, New York, 1991.

\bibitem{RN}S. Ramanujan, Notebooks (2 volumes), Tata Institute of Fundamental Research, Bombay, 1957.
\bibitem{NS1}
 Nipen Saikia, A new parameter for Ramanujan's theta-functions and explicit values, Arab J. Math. Sci., 18 (2) (2012), 105--119.

 \bibitem{Dye}
 D. Ye, Level 16 analogue of Ramanujan's theories of elliptic functions to alternative bases, J. Number Theory, 164 (2016), 191--207.
\bibitem{jy1}J. Yi, Construction and application of modular equations, Ph.D thesis, University of Illinois at Urbana-Champaign, 2004.

\bibitem{jy2}
J. Yi, Theta-function identities and the explicit formulas for theta-function and their applications, J. Math. Anal. Appl., 292 (2004), 381--400.
\bibitem{jy5}
J. Yi, Yang Lee and Dae Hyun Paek, The explicit formulas and evaluations of Ramanujan's theta-function $\psi$, J. Math. Anal. Appl., 321 (2006), 157--181.
\end{thebibliography}
\end{document}